\documentclass[12pt]{article}

\usepackage[utf8]{inputenc}
\usepackage[T1]{fontenc}
\usepackage{amsthm}
\usepackage{eucal}
\usepackage{dsfont}
\usepackage{mathrsfs}
\usepackage{stmaryrd}
\usepackage{amsmath}
\usepackage{amsfonts}
\usepackage{fancyhdr}
\usepackage{amssymb}
\usepackage{xcolor}
\definecolor{Prune}{RGB}{99,0,60}
\usepackage{mdframed}
\usepackage{multirow} 
\usepackage{multicol} 
\usepackage{scrextend} 
\usepackage{tikz}
\usepackage{graphicx}
\usepackage[absolute]{textpos}
\usepackage{colortbl}
\usepackage{array}
\RequirePackage{geometry}
\usepackage{geometry}
\usepackage[main=english]{babel}
\usepackage[final]{microtype}
\usepackage{lmodern}
\usepackage{textcomp}
\usepackage{pict2e}
\usepackage{enumitem}
\usepackage{hyperref}
\usepackage[nodayofweek]{datetime}
\usepackage{comment}

\hypersetup{            pdfpagemode={UseOutlines},            bookmarksopen,               pdfstartview={FitH},                  colorlinks,                     linkcolor={black},                 citecolor={black},                  urlcolor={black}                               }

\newdateformat{monthyear}{\monthname[\THEMONTH], \THEYEAR}



\newcommand{\N}{\ensuremath{\mathbb{N}}}
\newcommand{\R}{\ensuremath{\mathbb{R}}}


\theoremstyle{plain}
\newtheorem{theorem}{Theorem}[section]
\newtheorem{lemma}[theorem]{Lemma}
\newtheorem{prop}[theorem]{Proposition}

\theoremstyle{definition}
\newtheorem{remark}{Remark}[section]

\newcommand{\mysection}{\setcounter{equation}{0} \section}

\usepackage{authblk}


\def\0{{\mathbf{0}}}

\renewcommand{\P}{\mathbb{P}}

\newcommand{\E}{\mathbb{E}}




\textheight=23cm
\textwidth=13.8cm
\hoffset= 1cm
\parindent=16pt

\title{\textbf{Poisson process and sharp 
constants in  $L^p $ and Schauder estimates 
for a class of degenerate
Kolmogorov operators
}}

\author[1, 2,*]{\textbf{L. Marino}}
\author[1, 3,$\dagger $]{\textbf{S. Menozzi}}
\author[2,$\ddagger$]{\textbf{E. Priola}}

\affil[1]{\footnotesize{ Laboratoire de Mod\'elisation Math\'ematique d'Evry (LaMME), UMR CNRS 8071,
 Universit\'e Paris-Saclay, Universit\'e d'Evry Val d'Essonne, $23$
Boulevard de France $91037$ Evry, France.}}
\affil[2]{\footnotesize Dipartimento di Matematica, Universit\`a di Pavia, Via Adolfo Ferrata $5$, $27100$ Pavia,
Italy.}
\affil[3]{\footnotesize Laboratory of Stochastic Analysis, Higher School of Economics (HSE), Pokrovsky Boulevard, 11, Moscow, Russian Federation.}

\affil[*]{\footnotesize{lorenzo.marino@univ-evry.fr}}
\affil[$\dagger$]{\footnotesize{stephane.menozzi@univ-evry.fr}}
\affil[$\ddagger$]{\footnotesize{enrico.priola@unipv.it}}

\begin{document}
\maketitle
\begin{abstract}
We  consider a possibly degenerate Kolmogorov-Ornstein-Uhlenbeck operator of the form
   $L={\rm Tr}(BD^2)+\langle Az,D\rangle $, where  $A$, $B $ are $N\times N $ matrices, $z \in \R^N$, $N\ge 1 $, which satisfy the Kalman condition which is equivalent to the  hypoellipticity condition. We prove the following stability result:  the    Schauder and Sobolev estimates associated with the corresponding parabolic Cauchy problem remain valid, with the same constant, for the parabolic Cauchy problem associated with a second order perturbation of $L$, namely for $L+{\rm Tr}(S(t) D^2) $  where  $S(t)$ is a {\it non-negative definite}   $N\times N $ matrix depending continuously on $t \in [0,T]$.
 Our approach relies on the perturbative technique based on the Poisson process introduced in \cite{Krylov:Priola17}.
\end{abstract}

{\small{\textbf{Keywords:} Degenerate  Ornstein-Uhlenbeck operators,  multidimensional parabolic equations, $L^p$ and  Sobolev-space estimates, Schauder estimates, Poisson process.  }}

{\small{\textbf{MSC:} Primary: $35$K$10$, $35$K$15$, $60$J$76$.   }}

\section{Introduction}
Let us first consider the following parabolic Cauchy problem:
\begin{equation}
\label{ko_K}
\begin{cases}
   \partial_t u(t,x,y)\,  = \, \Delta_{x}u(t,x,y) + x\cdot \nabla_{y} u (t,x,y)   + f(t,x,y),\\
   u(0, x,y) \, = \, 0,
\end{cases}
\end{equation}
where  $(t,x,y)$ is in $(0, T)\times \R^{2d}$,  \textcolor{black}{for an integer  $d\ge 1$.}
The underlying differential operator
\[
L^{\text{K}}\, = \, \Delta_{x} +x\cdot \nabla_{y} =  \sum_{i=1}^{d}   \partial_{x_i x_i}^2 \, + \, \sum_{i=1}^{d}  \, x_i  \, \partial_{y_i}
\]
is the so-called Kolmogorov operator whose fundamental solution
was derived in the seminal paper \cite{kolm:34}. This particular operator was also mentioned  by H\"ormander as the starting point for his  theory of hypoelliptic operators \cite{Hormander67}.\newline
Let us  write $z = (x,y) \in  \R^{2d}$ and by $\partial_{z_j}$ and $\partial_{z_i z_j}^2$ we denote respectively \textcolor{black}{the} first and \textcolor{black}{the} second partial derivatives with $i,j = 1, \ldots, 2d$.

We are interested in studying the influence of a second order perturbation on equation \eqref{ko_K}.  Precisely, for a time-dependent \textcolor{black}{matrix $\{S(t)\colon t\in [0,T]\}$
  in $\R^{2d}\otimes \R^{2d}$ such that} $t\mapsto S(t) $ is continuous and $S(t) $ is {\it symmetric} and \emph{non-negative definite} for any fixed $t$,  we consider the \textit{perturbed} Cauchy problem:
\begin{equation}
\label{ko_K_Pert}
\begin{cases}
\partial_t u_S(t, z) \, = \, L^{\text{K}}u_S(t,z) + \sum_{i,j=1}^{2d} S_{ij}(t)\,  \partial_{z_i z_j}^2  u_S(t,z)   + f(t,z)
\\
\, \quad \;\;\; \;\;\;\;\; \quad  =: \, L^{\text{K},S}\, u_S(t,z)+f(t,z), \\
u_S(0, z) =0, \;\; z \in \R^{2d}.
\end{cases}
\end{equation}
\textcolor{black}{In particular}, we \textcolor{black}{will} show that Sobolev  (and Schauder) estimates which hold for solutions $u$ \textcolor{black}{of the Cauchy Problem} \eqref{ko_K} are also true, with the same constants,
for solutions $u_S $ to \eqref{ko_K_Pert}. Clearly, \textcolor{black}{the operator} $L^{\text{K}, S}$ \textcolor{black}{can be seen as a} perturbation of $L^{\text{K}}$ involving second order partial derivatives with continuous  time-dependent coefficients.


{For now}, let us explain our \textcolor{black}{main}  results
in a  special form
for equation \eqref{ko_K} in the case of $L^p$-estimates (or Sobolev estimates). \textcolor{black}{For a statement of our results in the whole generality}, we instead refer to Section 2.\newline
For a fixed final time $T>0$ and a source $f$ in $C_0^\infty((0,T)\times \R^{2d})$,
it is known from the work of Bramanti \textit{et al.}  \cite{bram:ceru:manf:96}, \textcolor{black}{Theorem $3.1$} (see also Section \ref{KH_OU} below), that \textcolor{black}{equation} \eqref{ko_K} admits a unique
classical bounded solution $u$ which
satisfies for \textcolor{black}{$p$ in $(1,+\infty)$}  the \textcolor{black}{following} estimates:
\begin{equation}\label{c_K}
 \| \Delta_{x} u \|_{L^p ((0,T) \times \R^{2d})} \le C_p\;  \| f \|_{L^p ((0,T) \times \R^{2d})} = C_p   \| \partial_t u - L^{\text{K}}  u \|_{L^p ((0,T) \times \R^{2d})}.
\end{equation}
 Note that  in this case  $C_p = C_p(d) >0$.
We will actually manage to prove that the unique classical bounded  solution $u_S$ to \eqref{ko_K_Pert} satisfies the estimate
\begin{equation}\label{c_K_PERT}
 \| \Delta_{x} u_S \|_{L^p ((0,T) \times \R^{2d})} \le C_p\;  \| f \|_{L^p ((0,T) \times \R^{2d})} = C_p   \| \partial_t u_S - L^{\text{K},S}  u_S \|_{L^p ((0,T) \times \R^{2d})},
\end{equation}
with the \textbf{same} previous constant $C_p$ as in \eqref{c_K}. This result seems to be new   even in dimension $2d=2$ and even if we only consider
$S(t) = S$, $ t\in [0,T]$,   where $S$  is a $2 \times 2$ {\it symmetric non-negative definite} matrix.

For a uniformly elliptic second order perturbation  $S(t) = S$, $t\in [0,T]$,   where $S$  is \emph{positive definite},  we could also have appealed to  \cite{Bramanti:Cupini:Lanconelli:Priola10}
to derive \textcolor{black}{estimates like in} \eqref{c_K_PERT}. For related estimates in the uniformly elliptic case, see also Section $4$ in Metafune \textit{et al.} \cite{meta:prus:rhan:schn:02}.  However, note that
 from \cite{Bramanti:Cupini:Lanconelli:Priola10}  and \cite{meta:prus:rhan:schn:02}  we could  only deduce that
the constant $C_p$
depends on the ellipticity constant of the perturbation  (this is the first eigenvalue $\lambda_S $ of $S$)
  and   on the maximum eigenvalue of $S$ (on this respect,  see also
    \cite{krylov2003} and  \cite{PriolaINDAM15}).

The remarkable point in \eqref{c_K_PERT} is that the $L^p$-estimates are stable under second order perturbations, which can be  possibly degenerate. Namely,
the fact that $S(t)$ might be degenerate for some $t$ in $(0,T)$, \textcolor{black}{or} even \textcolor{black}{in some} non-empty sub-intervals of $(0,T)$, does not affect the estimates in \eqref{c_K_PERT}.

To prove \eqref{c_K_PERT}, we
combine the results  of \cite{bram:ceru:manf:96}
with a probabilistic \textit{perturbative} approach based on the Poisson process inspired by \cite{Krylov:Priola17}.
There, it was established in particular that the $L^p$-estimates for
 non-degenerate parabolic heat equations with \textcolor{black}{space homogeneous}
 coefficients are
 valid with constants \textcolor{black}{that are} independent of the dimension.


\vskip 1mm

\begin{remark}
Importantly, the approach of \cite{Krylov:Priola17} turns out to be sufficiently robust to handle the estimates in the degenerate directions as well. We recall that the associated maximal $L^p$-regularity
was studied e.g. in \cite{bouc:02}, \cite{Huang:Menozzi:Priola19} or \cite{chen:zhan:19}. Let $p$ in $(1,+\infty)$, there exists $\tilde C_p>0$ such that for $f$ in $C_0^\infty((0,T)\times \R^{2d})$  the  unique classical bounded solution $u$ of \eqref{ko_K} verifies
\begin{equation}\label{LP_KOLMO_DEG}
  \| (\Delta_{y})^{\frac 13}u \|_{L^p ((0,T) \times \R^{2d})} \le \tilde C_p \;  \| f \|_{L^p ((0,T) \times \R^{2d})}=\tilde C_p   \| \partial_t u - L^{\text{K}}  u \|_{L^p ((0,T) \times \R^{2d})},
 \end{equation}
where $(\Delta_{y})^{\frac 13}$ denotes the fractional Laplacian with respect to the degenerate variables $y$ in $\R^{d}$. It turns out that this estimate is also stable for the previously described second order perturbation. Namely, for $u_S $ solving \eqref{ko_K_Pert},
\begin{equation}\label{LP_KOLMO_DEG_PERT}
 \| (\Delta_{y})^{\frac 13}u_S \|_{L^p ((0,T) \times \R^{2d})} \le \tilde C_p \;  \| f \|_{L^p ((0,T) \times \R^{2d})}=\tilde C_p   \| \partial_t u_S - L^{\text{K},S}  u_S \|_{L^p ((0,T) \times \R^{2d})},
 \end{equation}
where again $\tilde C_p $ is the same as in \eqref{LP_KOLMO_DEG}. \qed
\end{remark}


\vskip 1mm
\begin{remark}
The same type of stability results  will also hold for the corresponding  global Schauder estimates, first established in the framework of anisotropic H\"older spaces for the solution of \eqref{ko_K} by Lunardi \cite{Lunardi97} (see also \cite{Marino20}, \cite{Marino21} and the references
 therein).  We refer to estimate \eqref{SCHAU_OU_PERT}.
 \qed
\end{remark}


 We point out that our results in Section 3 could  also {possibly} be  obtained by using the general  theorems  of   Section 4 in \cite{Krylov:Priola17}. This section in \cite{Krylov:Priola17} introduces a more
 general probabilistic approach and  provides unexpected     regularity results. However    checking in our case all the assumptions given in  that  section   is quite involved. On the other hand,      we  provide    self-contained proofs   inspired by  Sections 2 and 3 of \cite{Krylov:Priola17}.

It remains a challenging open problem to have a purely analytic proof of our regularity results.


\paragraph {\bf $L^p$-estimates for degenerate Ornstein-Uhlenbeck operators.}
Let us now describe \textcolor{black}{the more general framework we are going to consider here}.
\textcolor{black}{Let}  $\R^N = \R^{d_0} \times \R^{d_1}$ where $d_0, d_1 $ are two {\it non-negative}   integers such that $d_0 + d_1 =N$  and $d_0 \ge 1$.
let us introduce the non-negative, symmetric matrix \textcolor{black}{$B$ in $\R^N\otimes\R^N$ given by}
$$ B = \begin{pmatrix}
 B_0  & 0\\
 0 & 0
\end{pmatrix} ,$$ where $B_0$ is a symmetric, positive definite matrix in $\R^{d_0}\otimes \R^{d_0}$ such that
 $$ \nu  \sum_{i=1}^{d_0}  {\xi_i^2}
  \le \sum_{i,j =1}^{d_0}
  (B_0)_{ij} \xi_i \xi_j
  \le \frac{1}{\nu} \sum_{i=1}^{d_0}  {\xi_i^2},
 $$
for all $\xi \in \R^{d_0}$, for some $\nu >0$.

We will use, as underlying  \textit{proxy} operators, the family of degenerate Ornstein-Uhlenbeck generators of the form
\begin{equation}
\label{DEF_OU_OP_PROXY}
L^\text{ou} f(z) = 
{\rm Tr}(B D^2 f(z))
+ \langle A z , D f(z)\rangle,\;\;\;  \; z =(x,y) \in \R^{d_0 + d_1}= \R^N,
\end{equation}
for a matrix  $A$ in $\R^N\otimes \R^N$, where $\langle \cdot , \cdot \rangle$
denote\textcolor{black}{s} the usual inner product in $\R^N$.
\textcolor{black}{Moreover}, we assume  the Kalman condition:
\begin{trivlist}
\item[\textbf{[K]}] There exists a  non-negative integer $k$,
 such that
\begin{align} \label{kal}
{\rm Rank} [ B,AB,\ldots,
  A^{k} B
] =N,
 \end{align}
 where $ [B,AB,...,A^{k}B]$  is the $\R^N\otimes \R^{N(k+1)}$  matrix whose
 blocks
 are $B,AB,$ $\ldots , A^kB$.
 From the non-degeneracy of $B_0$, the above condition amounts to say that the vectors
\begin{align} \label{kal_EQUIV}
 \{ e_1 , \ldots, e_{ d_0}, A e_1 ,
 \ldots, Ae_{ d_0}, \ldots, A^k e_1 , \ldots, A^k
 e_{ d_0}\} \;\;\; \mbox{generate} \;\; \R^N,
 \end{align}
 \end{trivlist}
where $\{e_i\}_{i\in \{ 1,\cdots,d_0\} }$ are the first $d_0$ vectors of  the canonical basis for $\R^N$.

 Assumption \textbf{[K]} (which \textcolor{black}{also often} appears  in control theory; see e.g.\ \cite{book:Zabczyk95}) is equivalent to the  H\"ormander condition on the
commutators (\textcolor{black}{c.f.} \cite{Hormander67})  ensuring the hypoellipticity of the operator $\partial_t-L^\text{ou}$. In particular, it implies the existence and \textcolor{black}{the} smoothness of a distributional solution \textcolor{black}{for the following equation:}
\begin{equation}
\label{eq:OU_initial:intro}
\begin{cases}
    \partial_t u(t,z) \, = \,  L^\text{ou} u(t,z) +f(t,z), &\mbox{ on }(0,T)\times \R^N;\\
   u(0,z)\, = \, 0, &\mbox{ on } \R^N,
\end{cases}
\end{equation}
 where $f$ is a function in $C_0^\infty((0,T)\times \R^N)$.

Similarly to  \cite{Krylov:Priola17}, we will prove below the existence and  uniqueness of bounded regular solutions to \eqref{eq:OU_initial:intro}
 assuming that the source $f$ belongs to the space  $B_b\left(0,T;C^\infty_0(\R^N)\right)$, which contains $C_0^\infty((0,T)\times \R^N)$, \textcolor{black}{and that} \textcolor{black}{ can be roughly described as the family of functions which are bounded measurable in time and compactly supported in space \textcolor{black}{uniformly} in time} (see Section \ref{SEC_NOT} for a precise definition). Equation  \eqref{eq:OU_initial:intro}
 will be \textcolor{black}{understood in an} integral form (cf. formula \eqref{int2}).

 By Theorem 3 in \cite{Bramanti:Cupini:Lanconelli:Priola10}  \textcolor{black}{and exploiting some explicit properties of the underlying heat kernel (see Section \ref{KH_OU} below), it can be derived that}
 for \textcolor{black}{any fixed $p$ in $(1,+\infty)$,} there exists  $C_p = C_p(\nu, A, d_0,d_1, T)$  such that
 \begin{equation}\label{de2}
\| D^2_x u \|_{L^p ((0,T) \times \R^N)}\, \le \,  C_p  \| \partial_t u -  L^\text{ou} u    \|_{L^p ((0,T) \times \R^N)}\, = \, C_p  \| f    \|_{L^p ((0,T) \times \R^N)},
\end{equation}
\textcolor{black}{where} for any $z$ in $\R^N$, $t \in [0,T]$, $D_x^2u(t,z)$ \textcolor{black}{stands for the Hessian matrix in $\R^{d_0}\otimes\R^{d_0}$ with respect to the variable $x$}.  We set
$$B_I = \begin{pmatrix}
 I_{d_0,d_0}  & 0_{d_0,d_1}\\
 0_{d_1,d_0} & 0_{d_1,d_1}
\end{pmatrix}$$
  and
  \textcolor{black}{note, in particular,} that \eqref{de2} can be rewritten
in the following, equivalent way:
\begin{equation}\label{de3}
\begin{split}
\| B_I D^2 u \, B_I \|_{L^p ((0,T) \times \R^N)} \, =
\, \|  D^2_x u \,  \|_{L^p ((0,T) \times \R^N)} \\
\le \,  C_p \| \partial_tu -   L^\text{ou} u    \|_{L^p ((0,T) \times \R^N)}
=\, C_p  \| f    \|_{L^p ((0,T) \times \R^N)},
\end{split}
\end{equation}
\textcolor{black}{where $D^2 u = D^2_z u$ represents instead the full Hessian matrix in $\R^N\otimes\R^N$ with respect to $z$}.

\textcolor{black}{Fixed} a continuous mapping $t\mapsto S(t)$ \textcolor{black}{such that} $S(t)$ is a {\it symmetric} and \emph{non-negative definite} \textcolor{black}{matrix in $\R^N\otimes\R^N$},  $t \in [0,T],$ we consider \textcolor{black}{again}
the following perturbation of $L^\text{ou}:$
\begin{equation} \label{ciao}
\begin{split}
L_t^{\text{ou},S} f(z) \, &:=\,  
{\rm Tr}(B D^2 f(z)) +
 {\rm Tr}(S(t) D^2 f(z))
+ \langle A z , D f(z)\rangle\\
&= \, L^\text{ou} f(z)+
{\rm Tr}(S(t) D^2 f(z)),
\end{split}
\end{equation}
where $z=(x,y)$ is in $\R^{d_0+d_1}=\R^N$.
For the solution $u_S$  of the related Cauchy problem
\begin{equation}
\label{eq:OU_PERT0}
\begin{cases}
    \partial_t u_S(t,z) \, = \,  L_t^{\text{ou},S} u_S(t,z) +f(t,z), &\mbox{ on }(0,T)\times \R^N;\\
   u_S(0,z)\, = \, 0, &\mbox{ on } \R^N,
\end{cases}
\end{equation}
we will prove the following main theorem:

\begin{theorem} \label{d44}  Let us consider \eqref{eq:OU_PERT0} with $f \in B_b\left(0,T;C^\infty_0(\R^N)\right) $. \textcolor{black}{Then,} there exists a unique solution $u_S$ \textcolor{black}{of Cauchy Problem \eqref{eq:OU_PERT0}}
 which verifies,
with the same constant $C_p$ as in \eqref{de3},
\begin{gather}\label{de4}
\|  D^2_x u_S \, \|_{L^p ((0,T) \times \R^N)}
=\| B_I D^2 u_S \, {B_I} \|_{L^p ((0,T) \times \R^N)}
\\ \nonumber \le C_p  \;  \| \partial_t u_S -   {L}_t^{{\rm{ou}},S} u_S  
\|_{L^p ((0,T) \times \R^N)}=C_p  \;  \| f
\|_{L^p ((0,T) \times \R^N)}.
\end{gather}
\end{theorem}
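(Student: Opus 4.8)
\emph{Overall strategy and well-posedness.} The plan is to adapt the probabilistic perturbation scheme of \cite{Krylov:Priola17}: the second–order perturbation $\tr(S(t)D^2)$ is realised as an \emph{independent} noise superimposed on the Ornstein–Uhlenbeck dynamics; after conditioning on that noise, each conditioned object will turn out to be, up to a deterministic translation in space, the solution of the \emph{unperturbed} Cauchy problem \eqref{eq:OU_initial:intro} with a source of the \emph{same} $L^p$–norm as $f$, to which estimate \eqref{de3} applies verbatim, with the constant $C_p$; averaging out the noise then produces \eqref{de4} with the same $C_p$. Existence and uniqueness of $u_S$ are settled first, exactly as for $S\equiv 0$: the perturbed process is Gaussian with mean $e^{A(t-s)}z$ and covariance $\mathcal K_{s,t}+\int_s^te^{A(t-r)}2S(r)e^{A^T(t-r)}\,dr$, so substituting it into the representation formula of the case $S\equiv0$ gives a bounded $u_S$ solving \eqref{eq:OU_PERT0} in the integral sense of \eqref{int2}, while uniqueness follows from the maximum principle (or from the representation itself). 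It remains to prove the bound.

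\emph{Step 1: removing the drift.} Set $v_S(t,z):=u_S(t,e^{-At}z)$. A direct computation shows that $v_S$ solves the \emph{driftless} equation $\partial_t v_S=\tr\big(\tilde B(t)D^2v_S\big)+\tr\big(\tilde S(t)D^2v_S\big)+g$, with $\tilde B(t)=e^{At}Be^{A^Tt}$, $\tilde S(t)=e^{At}S(t)e^{A^Tt}$ (still symmetric, non-negative definite and continuous) and $g(t,z)=f(t,e^{-At}z)$; the hypoellipticity condition \textbf{[K]} becomes $\int_0^t\tilde B(r)\,dr>0$ for $t>0$. Since this change of variables is applied identically to the unperturbed problem, proving \eqref{de4} is equivalent to proving the analogue of \eqref{de3} for $v_S$; the Jacobian weight $|\det e^{-At}|$ and the way $D^2_x$ is transformed depend on $t$ only, hence remain invariant under translations in $z$, and --- the decisive gain --- in these variables the perturbation $\tr(\tilde S(t)D^2)$ is translation invariant and commutes with $\tr(\tilde B(t)D^2)$ and with the transformed $D^2_x$.

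\emph{Step 2: Poisson realisation and conditioning.} For $\lambda>0$ replace $\tr(\tilde S(t)D^2)$ by the bounded operator $\lambda(\mathcal Q^\lambda_t-\Id)$, where $\mathcal Q^\lambda_t$ is convolution with $\mathcal N(0,\tfrac2\lambda\tilde S(t))$: this generates the driftless diffusion with generator $\tr(\tilde B(t)D^2)$ \emph{interrupted}, at the jump times $\tau_1<\tau_2<\cdots$ of an independent rate–$\lambda$ Poisson process, by independent kicks $\kappa_k\sim\mathcal N(0,\tfrac2\lambda\tilde S(\tau_k))$; let $v^\lambda_S$ be the corresponding solution. Conditioning on a realisation $\omega$ of the Poisson process and of the kicks on $[0,T]$, and writing $\Sigma^0_r:=\sum_{\tau_k<r}\kappa_k$ (a finite, piecewise constant and bounded vector function on $[0,T]$), the representation formula reads
\[ v^\lambda_S(t,z)=\E_\omega\!\left[\int_0^t\big(\mathcal U_{s,t}\,g(s,\cdot)\big)\big(z+\Sigma^0_t-\Sigma^0_s\big)\,ds\right], \]
where $\mathcal U_{s,t}$ is the (translation-invariant) propagator of $\tr(\tilde B(t)D^2)$. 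Translating $z\mapsto z-\Sigma^0_t$ inside the expectation and using translation invariance of $\mathcal U_{s,t}$, the conditioned integral equals $v^{g_\omega}(t,z+\Sigma^0_t)$, where $v^{g_\omega}$ solves the \emph{unperturbed} driftless equation with source $g_\omega(s,z):=g(s,z-\Sigma^0_s)$; as $\Sigma^0$ is bounded on $[0,T]$, $g_\omega\in B_b\!\left(0,T;C^\infty_0(\R^N)\right)$, and $g_\omega$ is a $t$–slicewise translate of $g$, hence has the same norm. Applying \eqref{de3} in the $v$–form of Step 1, the invariance of the norm under such translations, and Minkowski's integral inequality for the average over $\omega$,
\[ \big\|D^2_x v^\lambda_S\big\|\ \le\ \E_\omega\big\|D^2_x v^{g_\omega}\big\|\ \le\ C_p\,\E_\omega\|g_\omega\|\ =\ C_p\,\|g\|\ , \]
\emph{uniformly in} $\lambda$.

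\emph{Step 3 and the main obstacle.} Letting $\lambda\to\infty$, the perturbed generators converge, $\lambda(\mathcal Q^\lambda_t-\Id)\to\tr(\tilde S(t)D^2)$, so $v^\lambda_S\to v_S$ pointwise (from the representation formula and the law-of-large-numbers/central-limit behaviour of $\Sigma^0_t-\Sigma^0_s$); since $D^2_x v^\lambda_S$ is bounded in $L^p$, a standard weak-compactness and lower-semicontinuity argument identifies the limit with $D^2_x v_S$ and gives $\|D^2_x v_S\|\le C_p\|g\|$, whence \eqref{de4} with exactly the constant $C_p$ of \eqref{de3} after undoing Step 1. The crux is Step 2: one has to verify that, \emph{conditionally on the perturbation noise}, the object at hand is \emph{genuinely} a solution of the unperturbed equation with a source of the \emph{same} $L^p$–norm --- this is what forbids any deterioration of the constant, whereas a na\"ive Duhamel expansion in the perturbation parameter would instead reproduce $\|\tr(S(t)D^2)u_S\|_{L^p}$, uncontrolled in the degenerate directions, and would accumulate constants. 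This is exactly why the drift reduction of Step 1 is indispensable --- without it the space shifts produced by the kicks carry $t$–dependent factors $e^{A(t-\tau_k)}$ and are no longer ordinary translations, so the reduction collapses --- and why it matters that the only structural facts used are the translation invariance of $D^2_x$ (which acts only on the non-degenerate block, yet is still a constant-coefficient operator) and of the driftless propagator. The remaining work is routine: enough regularity of $u_S$ and the legitimacy of interchanging $D^2_x$, the time integral and $\E_\omega$ within $B_b\!\left(0,T;C^\infty_0(\R^N)\right)$. (One could bypass the $\lambda$–limit altogether by superimposing directly a continuous independent Gaussian perturbation with independent increments, carrying out the conditioning argument once; the Poisson realisation is, however, the one fitting the general framework of \cite{Krylov:Priola17} and extending to the Schauder estimates.)
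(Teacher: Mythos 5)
Your proposal is correct and rests on the same two pillars as the paper's argument — first removing the drift so that everything becomes translation invariant in space, then the Krylov--Priola device of conditioning on an auxiliary Poisson-driven shift so that the unperturbed estimate \eqref{de3}, in its driftless weighted form \eqref{2}, applies $\omega$-wise to a slicewise-translated source of the \emph{same} norm, followed by Jensen/Fubini — but the implementation differs in a genuine way. The paper (Theorem \ref{uno} and Section \ref{SEC_PERTURBATIVO}) perturbs with \emph{deterministic} rank-one kicks $\epsilon\sqrt{Q'(\sigma_k)}\,e_1$, runs the Poisson averaging twice to manufacture the symmetric second-order finite difference in \eqref{eq:3_OU}, passes to the limit $\epsilon\to0$ by an Arzel\`a--Ascoli argument on $w_\epsilon$ and all its spatial derivatives, and then iterates over the $N$ directions; you instead realize the whole perturbation ${\rm Tr}(\tilde S(t)D^2)$ in one stroke through Gaussian marks of covariance $\tfrac2\lambda \tilde S(t)$, get the uniform-in-$\lambda$ bound from the conditional representation, and identify a single limit $\lambda\to\infty$ via the representation formula plus weak compactness and lower semicontinuity of the $L^p$-norm. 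Your route buys a shorter limiting argument and no iteration over directions, at the price of a marked (compound-Poisson) version of Lemma \ref{LEMME_POISSON} if you insist on writing the nonlocal PDE satisfied by $v^\lambda_S$ — though, as you observe, you may bypass that PDE by \emph{defining} $v^\lambda_S$ through the representation, and your parenthetical remark goes further: since the perturbed driftless problem has the explicit Gaussian representation of Proposition \ref{PROP_WP_DRIFTLESS} with $Q$ replaced by $Q+Q'$, conditioning directly on an independent Brownian shift $Z_t=\sqrt2\int_0^t\tilde S(r)^{1/2}d\tilde W_r$ (whose paths are continuous, hence the shifted source stays in $B_b\left(0,T;C^\infty_0(\R^N)\right)$) yields \eqref{de4} with no limiting procedure at all. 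The paper's finite-difference scheme is heavier here, but it is the form that uses only the abstract assumption \eqref{s223} and transfers verbatim to the Schauder setting of Section \ref{ESTENSIONI}. Two points you flag as routine should indeed be made explicit in a write-up: the equality $\|g_\omega\|_{L^p(\mathfrak m)}=\|g\|_{L^p(\mathfrak m)}$ uses that $\mathfrak m$ has a density depending on $t$ only, and the identification of the weak $L^p$-limit of $D^2_x v^\lambda_S$ with $D^2_x v_S$ needs $v^\lambda_S\to v_S$ in $L^1_{\rm loc}$ (pointwise convergence plus the uniform bound \eqref{max}) together with the fact that $v_S$ is a classical-in-space solution, so its distributional and classical Hessians coincide.
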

We point out  that for time-homogeneous non-negative definite matrices $S$,
 the  corresponding  elliptic $L^p$-estimates  as in formula (5) of \cite{Bramanti:Cupini:Lanconelli:Priola10}   (replacing $\mathcal A$ in \cite{Bramanti:Cupini:Lanconelli:Priola10} with  $ L^{\text{ou},S}: =$ $
{\rm Tr}(B D^2 \cdot ) +
 {\rm Tr}(S D^2 \cdot  )
+ \langle A z , D \cdot \rangle
)$ {\sl with a constant independent of $S$,}  could also be derived from \eqref{de4}  using an argument  given in \cite{Bramanti:Cupini:Lanconelli:Priola10}.

 For more information on the OU operator $L^{ou}$ we also  refer to  the recent work by Fornaro \textit{et al.} \cite{forn:meta:pall:schn:21} about  full description of the spectrum of degenerate OU operators in $L^p$-spaces.

{Independently from the constant preservation, we also emphasize that the $L^p$-estimates in \eqref{de4} for the perturbed operator seem, to the best of our knowledge, to be new and have some interest by their own.}

    {Let us eventually mention that  our stability results could turn out to be useful to investigate the well-posedness of some related stochastic differential equations through the corresponding martingale problem}.

      We could  actually derive more general estimates, \textcolor{black}{possibly} involving the degenerate directions as well, \textcolor{black}{dependingly on} the structure of $A$. \textcolor{black}{Some results in that direction are gathered in Section \ref{ESTENSIONI}}. Anyhow, to illustrate our approach we now briefly present the various steps to derive  \eqref{de4}.


\subsection{Strategy of the proof for estimate \eqref{de4}.}  \label{SEZ_STRAT}
\textcolor{black}{Fixed a classical bounded solution $u$ to Cauchy Problem \eqref{eq:OU_initial:intro}}, let us introduce $v(t,z) := u(t, e^{-tA} z)$.  This \textcolor{black}{well-known}  transformation (cf. \cite{daprato-Lunardi}) precisely allows to get rid of the drift term in the PDE satisfied by  $v$.
Indeed, we have \textcolor{black}{that}  $u(t,z) = v(t, e^{tA} z)$
and since $ u$ solves \eqref{eq:OU_initial:intro}, it holds for any $(t,z)$ in $(0,T)\times \R^N$, that:
 \begin{equation}
 \begin{split}
 f {(t,z)}\, &= \,  \partial_t u(t,z) {\color{black} - L^{\text{ou}}}  u(t,z)\\
 &= \, v_t(t,e^{tA} z) + \langle D v
 (t, e^{tA} z) , A e^{tA} z\rangle  {\color{black} -} {\text Tr} \big( e^{tA} B
 e^{tA^*} D^2  v(t, e^{tA} z) \big) \\
& \qquad - \langle D v
 (t, e^{tA} z) , A e^{tA} z\rangle\\
&= \, v_t(t,e^{tA} z)    {\color{black} -} {\text Tr} \big( e^{tA} B
 e^{tA^*} D^2 v(t, e^{tA} z) \big).
 \end{split}
 \label{COMP_0H_VAR}
\end{equation}
\textcolor{black}{Denoting $\tilde{f}(t,z):= f(t,e^{-tA}z)$,}
 it now follows that $v$ satisfies the PDE:
 \begin{equation} \label{ma}
 \begin{cases}
 \partial_tv(t, z)   \, = \,  {\text Tr} \big( e^{tA} B
 e^{tA^*} D^2 v(t, z) \big) + \tilde f(t,z) &\mbox{ on }(0,T)\times \R^N;\\
 v(0,z) =0 &\mbox{ on } \R^N.
 \end{cases}
\end{equation}
In terms of the function $v$, the known  estimates in \eqref{de3} rewrites as:
\begin{equation} \label{1}
 \| {B_I} e^{ t A^*} D^2 v (t, e^{tA} \cdot ) \, e^{tA} {B_I} \|_{L^p((0,T)\times \R^N)} \le C_p  \|  \tilde f(t, e^{tA}  \cdot )  \|_{L^p((0,T)\times \R^N)},
\end{equation}
where we used the notation  $\| {B_I} e^{- t A^*} D^2 v (t, e^{tA} \cdot ) \, e^{tA} {B_I} \|_{L^p((0,T)\times \R^N)} $  to stress the dependence on $t$
 instead of the more precise  formulation
$$
{ \| {B_I} e^{ \cdot \,  A^*} D^2 v (\cdot , e^{ \cdot \, A} \cdot ) \, e^{ \cdot \, A} {B_I} \|_{L^p((0,T)\times \R^N)}}.
$$
By changing variable in the integrals,
 control \eqref{1}  is equivalent to
\begin{equation}\label{2}
\| {B_I} e^{tA^*} D^2 v (t,  \cdot ) \, e^{tA} {B_I} \|_{L^p((0,T)\times \R^N,{m})} \le C_p  \|  \tilde f  \|_{L^p((0,T)\times \R^N, {m})},
\end{equation}
{where $L^p((0,T)\times \R^N,m)$ denotes the $L^p$-norms w.r.t. the measure $m(dt,dx)= {\rm det}(e^{-At}) dt dx $}.

Considering now the following more general Cauchy problem on
 $[0,T] \times \R^N$
\begin {equation} \label{d2}
\begin{cases}
 \partial_t w(t, z)    =   {\text Tr} \big( e^{tA} B
 e^{tA^*} D^2 w(t, z) \big) + {\text Tr} \big( e^{tA} S(t)
 e^{tA^*} D^2 w(t, z) \big) +  \tilde f(t,z);\\
 w(0,z)=0,
 \end{cases}
\end{equation}
we  can  establish \textcolor{black}{the well-posedness of} the Cauchy problem \eqref{d2}, \textcolor{black}{exploting}, for instance, probabilistic arguments, using  the underlying Gaussian process.

Now  the crucial  step consists in adapting  some arguments from \cite{Krylov:Priola17} based on the use of the Poisson process  to derive that the same
 $L^p$-estimates in \eqref{2} still hold for $w$, independently \textcolor{black}{from} the non-negative definite, symmetric matrices  $S(t)$. Precisely,
\begin{equation} \label{11}
\| {B_I} e^{tA^*} D^2 w (t, \cdot ) \, e^{tA} {B_I} \|_{L^p((0,T)\times \R^N, {m})} \le C_p \|  \tilde f(t,  \cdot )  \|_{L^p((0,T)\times \R^N, {m})},
\end{equation}
with the \emph{same} constant $C_p$ \textcolor{black}{appearing in \eqref{2}}.

The last step then consists in  coming back  to the Ornstein-Uhlenbeck operators \textcolor{black}{framework}.  Namely, \textcolor{black}{we introduce} $\tilde u(t,z) := w(t, e^{tA} z)$ \textcolor{black}{which} solves, by definition, the following equation:
 \begin{equation*}
\begin{cases}
 \partial_t \tilde u(t,z) = L_t^{\text{ou}, S} \tilde u(t,z)  + f  (t,z),\ (t,z)\in (0,T)\times \R^N,\\
 \tilde u(0,z)=0,\ z\in \R^N.
\end{cases}
\end{equation*}
  Thus  $\tilde u = u_S$.
 {Noticing} that $D^2 w(t, \cdot) = D^2 [\tilde u(t, e^{-tA} \, \cdot )] $ $= e^{-tA^*}D^2 \tilde u(t, e^{-tA} \cdot )e^{-tA} $  we thus get from \eqref{11} that the following estimates hold:
\begin{equation}\label{de311}
\| {B_I} D^2 \tilde u \, {B_I} \|_{L^p ((0,T) \times \R^N)} \, \le \, 
C_p  \|  f \|_{L^p ((0,T) \times \R^N)} .
\end{equation}
Through the previous steps we have then constructed a solution $\tilde u $ \textcolor{black}{of Cauchy Problem} \eqref{eq:OU_PERT0} which indeed satisfies \textcolor{black}{the estimates in} \eqref{de4} with the same $C_p$, associated with the \textit{unperturbed} or \textit{proxy} operator.
 The maximum principle will eventually provide uniqueness for the solution $\tilde u $.

  \vskip 1mm
  \begin{remark} i) We point out that we could also consider more  general time-dependent Ornstein-Uhlenbeck operators like:
  \[   M  \, = \, {\rm Tr}(B(t) D^2 \cdot)
+ \langle A z , D \cdot \rangle.\]
Arguing as before starting from $L^p$-estimates (or Schauder estimates) for $M$ we can derive the same $L^p$-estimates (or Schauder estimates) for a perturbation of $M$ like \eqref{ciao}.

ii) We could  extend the $L^p$-estimates (or the Schauder estimates) related to  $L^{{\rm ou}}$ to more general
 operators like
$$
L_t^{\text{ou},S} f(z) +  \langle b(t), D f(z) \rangle
$$
where $b: \R_+ \to \R^N$ is continuous.  We can   even add to $L_t$  a  possibly degenerate   non-local perturbation  (cf. Section 7 of \cite{Krylov:Priola17}). The $L^p$-estimates (or Schauder estimates) are  still   preserved with the same constant.
For the sake of simplicity in the sequel we will only consider $b(t)=0$ and we will  not deal with  non-local perturbations of $L_t^{\text{ou},S}.$
  \end{remark}

\paragraph{Organization of the paper.}
The article is organized as follows. At the end of the current section, we first fix  some useful notations.
In Section \ref{SEC_DRIFT_LESS} we will then focus on driftless second order Cauchy problems associated with a non-negative definite, possibly degenerate, diffusion matrix. We will also consider its   relation to the Ornstein-Uhlenbeck dynamics.  We will establish through the probabilistic perturbation approach of \cite{Krylov:Priola17} that if some $L^p$-estimate holds  for a particular diffusion matrix so does it, with the same associated constant  as explained before, for a non-negative perturbation of the diffusion matrix (see Section 3).
Finally, by the arguments of Section 1.1 we will obtain \eqref{de311}.  Stability results in anisotropic
Sobolev space and Schauder estimates are given in Section 4.



\subsection{Definition  of solution and useful  notations}\label{SEC_NOT}


%

Let us consider the following Cauchy problem:
\begin{equation}
\label{eq:Cauchy_problem_gen0}
\begin{cases}
\partial_t v(t,z)   =  \text{tr}\left(Q(t)D^2 v(t,z)\right) +
\langle  b(t,z), Dv(t,z) \rangle
+f(t,z), \mbox{ on }(0,T)\times\R^N;\\
v(0,z)  =  0, \mbox{ on } \R^N;
\end{cases}
\end{equation}
where $Q\colon [0,T]\to \R^N\otimes \R^N$ is a {\it   continuous  symmetric non-negative definite matrix} and $b: [0,T] \times \R^N \to \R^N$ is a {\it continuous function} such that  $|b(t,z)| \le K_T (1+ |z|)$, $(t,z) \in [0,T] \times \R^N$, for some constant $K_T>0$.

The function $f$ belongs to  $B_b\left(0,T;C^\infty_0(\R^N)\right)$, the space of all Borel bounded functions $\phi \colon [0,T]\times\R^N \to \R$ such
that $\phi(t,\cdot)$ is smooth and compactly supported for any $t$ in $[0,T]$; for any $n$ in $\N$ the $C^n(\R^N)$-norms of $\phi(t,\cdot)$ are bounded in time and the supports of the functions $\phi(t,\cdot)$ are contained in the same ball. Moreover, we require that, for any $z \in \R^N$, the mapping:
\begin{equation}\label{dd}
t \mapsto \phi(t,z)
\end{equation}
is a \emph{piece-wise continuous} function on $[0,T]$, i.e.\ it is continuous except \textcolor{black}{for} a finite number of points.

\vskip 1mm

\begin{remark}
 Note that to perform the technique used in \cite{Krylov:Priola17} and based on the Poisson process we
need to consider equations like \eqref{eq:Cauchy_problem_gen0} with a source $f$ which is possibly discontinuous in time (cf. the proof in Section 2 of \cite{Krylov:Priola17} and Section \ref{SEC_PERTURBATIVO} below).
\end{remark}

We interpret \textcolor{black}{Cauchy Problem \eqref{eq:Cauchy_problem_gen0}} in  an {\it integral} form:
\begin{equation}\label{int2}
v(t,z)=\int_0^t \Big(f(s,z)+{\rm Tr}(Q(s) D^2 v(s,z) ) +  \langle  b(s,z), Dv(s,z) \rangle\Big) ds.
\end{equation}
\textcolor{black}{In particular,} we say that a continuous and bounded function $v : [0, T] \times \R^N \to \R$ is a solution to equation \eqref{eq:Cauchy_problem_gen0} if $v(t, \cdot)$ belongs to  $C^{2}(\R^N)$, for any $t \in [0,T]$, and \eqref{int2} holds as well,  for any $(t,z) \in [0, T] \times \R^N$.

We finally note that, for any $z \in \R^N$, the function $t \mapsto v(t,z)$ is a $C^1$-piece-wise   function on $[0,T]$.


 By Theorem 4.1 in \cite{KP10} we  deduce in a quite standard way   that if a solution $v$ exists then it is unique and the following maximum principle holds:
\begin{equation}\label{max}
 \sup_{(t,z)\in [0,T] \times \R^N}|v(t,z)|\le T\,  \sup_{(t,z)\in [0,T]\times \R^N}|f(t,z)|.
\end{equation}
 About the proof of \eqref{max} we only make some remarks.
 By considering $v$ and $-v$ we see that it is enough to prove
 that $v(t,z) \le T \| f\|_{\infty}$, for all $(t,z)\in [0,T] \times \R^N$. Moreover, setting $\tilde v = $ $
  v - t \| f\|_{\infty}$, we note that  $\tilde v$ verifies  \eqref{int2} with $f$ replaced by $f - \| f\|_{\infty} \le 0$.
   Finally, by considering the equation verified by  $e^{- t} \tilde v$, we can apply Theorem 4.1 in \cite{KP10}
   to obtain the result.

\mysection{Estimates for driftess second order operators and related perturbation}
\label{SEC_DRIFT_LESS}
Throughout this section, we consider the following Cauchy problem:
\begin{equation}
\label{eq:Cauchy_problem_gen}
\begin{cases}
\partial_t v(t,z) \, = \, \text{Tr}\left(Q(t)D^2 v(t,z)\right) +f(t,z) &\mbox{ on }(0,T)\times\R^N;\\
v(0,z) \, = \, 0 &\mbox{ on } \R^N,
\end{cases}
\end{equation}
 which can be seen as  a special case of \eqref{eq:Cauchy_problem_gen0} when $b=0$. Moreover, we assume that $Q$ is not identically zero.

\subsection {Well-posedness }
\begin{prop}[Well-posedness in integral form for the driftless Cauchy problem]\label{PROP_WP_DRIFTLESS}
\textcolor{black}{Let $f$ be in $B_b\left(0,T;C^\infty_0(\R^N)\right)$. Then,} there exists a unique
 solution $v$ to Cauchy problem \eqref{eq:Cauchy_problem_gen} \textcolor{black}{in an integral sense}, i.e.,  it solves for  $(t,z)\in [0,T]\times \R^N$:
\begin{equation}
\label{INTEGRAL}
v(t,z)=\int_0^t \Big(f(s,z)+{\rm Tr}(Q(s) D^2 v(s,z)\Big)ds.
\end{equation}
We will denote in short $v=PDE(Q,f)$.

\end{prop}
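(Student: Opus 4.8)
The plan is to construct the solution explicitly by a Duhamel (variation of constants) formula built on the Gaussian evolution operators attached to $Q$, and then to deduce uniqueness directly from the maximum principle \eqref{max}. First I would set, for $0\le s\le t\le T$,
\[
\mathcal{Q}(s,t)\, :=\, 2\int_s^t Q(r)\,dr,
\]
a continuous family of symmetric non-negative definite matrices, and let $\mu_{s,t}$ be the centred Gaussian law on $\R^N$ with covariance $\mathcal{Q}(s,t)$ --- a Dirac mass at the origin when $\mathcal{Q}(s,t)=0$, and in general a degenerate Gaussian carried by the range of $\mathcal{Q}(s,t)$; equivalently, $\mu_{s,t}$ is the law of $\int_s^t \sqrt{2Q(r)}\,dW_r$ for a standard Brownian motion $W$. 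I would then introduce the evolution operators $(P_{s,t}g)(z):=\int_{\R^N}g(z+w)\,\mu_{s,t}(dw)$ and record three elementary facts, each of which follows from the Fourier representation $(P_{s,t}g)(z)=(2\pi)^{-N}\int_{\R^N}e^{i\langle\xi,z\rangle}\,e^{-\frac12\langle\mathcal{Q}(s,t)\xi,\xi\rangle}\,\widehat g(\xi)\,d\xi$ together with the rapid decay of $\widehat g$ for $g\in C_0^\infty(\R^N)$: (i) $P_{s,s}=\mathrm{Id}$; (ii) $D^\alpha P_{s,t}g=P_{s,t}D^\alpha g$ and $\|D^\alpha P_{s,t}g\|_\infty\le\|D^\alpha g\|_\infty$ for every multi-index $\alpha$; and (iii) the map $(s,t,z)\mapsto(P_{s,t}g)(z)$ is continuous, and differentiating the Gaussian factor in $t$ (note $\tfrac{d}{dt}\mathcal{Q}(s,t)=2Q(t)$) gives
\[
\partial_t(P_{s,t}g)(z)\, =\, {\rm Tr}\big(Q(t)\,D^2(P_{s,t}g)(z)\big)\, =\, \big(P_{s,t}[\,{\rm Tr}(Q(t)D^2 g)\,]\big)(z),
\]
the right-hand side being bounded and extending continuously up to $s=t$, where it equals ${\rm Tr}(Q(t)D^2 g(z))$.

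The candidate solution would then be
\[
v(t,z)\, :=\, \int_0^t\big(P_{s,t}f(s,\cdot)\big)(z)\,ds,\qquad (t,z)\in[0,T]\times\R^N.
\]
Because $f\in B_b(0,T;C_0^\infty(\R^N))$, the sup-norms of $f(s,\cdot)$ and of all its spatial derivatives are bounded uniformly in $s$, so by (ii) the integrand and each of its $z$-derivatives are dominated uniformly in $s$; hence $v(t,\cdot)\in C^\infty(\R^N)$ with $D^\alpha_z v(t,z)=\int_0^t(P_{s,t}D^\alpha f(s,\cdot))(z)\,ds$, and $\|v\|_\infty\le T\sup_{s\in[0,T]}\|f(s,\cdot)\|_\infty$; in particular $v(t,\cdot)\in C^2(\R^N)$ and $v$ is bounded. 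Continuity of $(t,z)\mapsto v(t,z)$ would follow from (iii) and dominated convergence, the finitely many discontinuity points of $s\mapsto f(s,\cdot)$ being negligible for the $ds$-integral. On each subinterval of $[0,T]$ on which $s\mapsto f(s,\cdot)$ is continuous, the Leibniz rule applies --- the continuous extension of $\partial_t(P_{s,t}f(s,\cdot))(z)$ to $s=t$ furnished by (iii) removes any singularity --- and yields
\[
\partial_t v(t,z)\, =\, f(t,z)+\int_0^t{\rm Tr}\big(Q(t)\,D^2(P_{s,t}f(s,\cdot))(z)\big)\,ds\, =\, f(t,z)+{\rm Tr}\big(Q(t)\,D^2 v(t,z)\big).
\]
Integrating this identity in $t$ over the finitely many subintervals and using $v(0,z)=0$ would give exactly \eqref{INTEGRAL}, so $v$ is a solution, and it is automatically $C^1$-piecewise in $t$.

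Uniqueness would then be immediate: if $v_1,v_2$ both solve \eqref{INTEGRAL}, then $v_1-v_2$ is bounded, continuous, $C^2$ in $z$ and solves \eqref{int2} with $b=0$ and source $0$, so \eqref{max} forces $\sup|v_1-v_2|\le T\cdot 0=0$; one then writes $v=PDE(Q,f)$.

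The main obstacle, modest though it is, is twofold. The possible degeneracy of $Q$ (and of $\mathcal{Q}(s,t)$) I would handle uniformly through the Fourier multiplier $e^{-\frac12\langle\mathcal{Q}(s,t)\xi,\xi\rangle}$, which is always bounded by $1$ and never singular, so that no non-degeneracy of $Q$ is ever invoked; and the time-discontinuity of $f$ merely forces the bookkeeping over subintervals of continuity and affects neither the continuity of $v$ in $t$ nor the derivation of \eqref{INTEGRAL}. I expect the step demanding the most care to be the justification of differentiating under the integral sign in $t$ near $s=t$, which rests precisely on fact (iii).
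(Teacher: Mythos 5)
Your construction is correct and essentially the same as the paper's: you take exactly the candidate $v(t,z)=\int_0^t \E[f(s,z+I_{s,t})]\,ds$ used there (your $\mu_{s,t}$ is precisely the law of $I_{s,t}=\sqrt{2}\int_s^t Q(r)^{1/2}\,dW_r$), differentiate under the integral in $t$ for a.e.\ $t$, and obtain uniqueness from the maximum principle \eqref{max}, just as the paper does. The only variation is that the paper justifies the key identity $\partial_t\big(P_{s,t}g\big)={\rm Tr}\big(Q(t)D^2 P_{s,t}g\big)$ by applying the It\^o formula in space to $f(s,z+I_{s,u})$, whereas you differentiate the Gaussian Fourier multiplier $e^{-\frac12\langle\mathcal{Q}(s,t)\xi,\xi\rangle}$; both arguments are valid and neither requires non-degeneracy of $Q$.
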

\begin{proof}
By the maximum principle  (cf. equation \eqref{max}) uniqueness  holds for Cauchy Problem \eqref{eq:Cauchy_problem_gen}. \textcolor{black}{We can then focus on proving the existence of a solution.}
\textcolor{black}{Let us introduce now}  \[v(t,z) \, := \, \int_0^t \mathbb E[f(s,z+I_{s,t})] \, ds\]
with the following notation: $I_{s,u}:=\sqrt 2\int_s^u Q(r)^{1/2}dW_r $, where $W$ is an $N$-dimensional Brownian motion on some probability space $(\Omega,\mathcal F,(\mathcal F_t)_{t\ge 0},\mathbb P) $ and $Q(r)^{1/2}$ stands for a square root of $Q(r)$, i.e. $Q(r)=Q(r)^{1/2}(Q(r)^{1/2})^* $.


\textcolor{black}{Applying the  It\^o formula in space to $f(s, z+I_{s,u} )_{u\in [s,t]}$}, we get that
$$
\E f(s, z+  I_{s,t} ) = f(s,z)+ \mathbb E \Big[\int_s^{t} {\rm{Tr}}(Q(u) D^2 f(s,z+I_{s,u})){}du \Big].
$$
Hence,
$$
v(t,z)=\int_{0}^t \Big(f(s,z)+\mathbb E[\int_s^{t} {\rm{Tr}}(Q(u) D^2 f(s,z+I_{s,u})){}du]\Big)ds,$$
from which it readily follows that
\[
\begin{split}
\partial_t v(t,z) \, &= \, f(t,z)+\int_0^t  \mathbb E[{\rm{Tr}}(Q(t) D^2f(s,z+I_{s,t}))]ds\\
&= \, f(t,z)+{\rm{Tr}}\Big(Q(t) D^2\int_0^t  \mathbb E[ f(s,z+I_{s,t})]ds\Big)\\
&= \, f(t,z)+{\rm{Tr}}\Big(Q(t) D^2v(t,z)\Big).
\end{split}
\]
\textcolor{black}{for almost every $t\in [0,T]$ and any $z \in\R^N $}.
\end{proof}

\subsection{Relation to the Ornstein-Uhlenbeck dynamics}\label{REL_OU}
If now in particular, $Q(t)$ has the particular form   $Q(t)= e^{tA} B
 e^{tA^*}$ (cf.\ equation \eqref{ma}), we introduce
 $$
 u(t,z):=v(t,e^{tA}z),
 $$
  where $v$ is the solution to \eqref{INTEGRAL}   (see Proposition \ref{PROP_WP_DRIFTLESS}).
 Since we can differentiate with respect to $t$ the function $u(\cdot,z)$ for a.e. $t \in [0,T]$, we can perform
 computations similar to \eqref{COMP_0H_VAR} and get  that $ u (t,z)$  solves in integral form:
\begin{equation}
\label{eq:Cauchy_pro}
\begin{cases}
\partial_t u(t,z) \, = \,  L^\text{ou} \, u(t,z) +\bar f(t,z), &\mbox{ on }(0,T)\times\R^N;\\
u(0,z) \, = \, 0, &\mbox{ on } \R^N;
\end{cases}
\end{equation}
with $L^\text{ou}$ as in  \eqref{DEF_OU_OP_PROXY},
$\bar f(t,z)=f(t,e^{tA}z) $.  Precisely, for all $(t,z)\in [0,T]\times\R^N $,
\begin{equation}
\label{INTEGRAL_OU}
u(t,z)=\int_0^t \Big(\bar f(s,z)+ L^\text{ou} v(s,z)\Big) ds.
\end{equation}
We have  that $u$ is a solution to \eqref{eq:Cauchy_pro}.

 Let us also point out that the well-posedness of \eqref{eq:Cauchy_pro} could also have been obtained directly from Gaussian type calculations,
similar to those in the proof of Proposition \ref{PROP_WP_DRIFTLESS},
 introducing $u^{{\rm ou}}(t,z):=\int_0^t \mathbb E[\bar f(s,e^{(t-s)A}z+I_{s,t}^{{\rm ou}})] ds$ where $I_{s,u}^{{\rm ou}}:=\sqrt 2\int_s^u \textcolor{black}{e^{(u-v)A}} B dW_v  $.

\subsection{About the $L^p$-estimate \eqref{de2} for the OU operator}
\label{KH_OU}
{The aim of this section is to} fully justify the estimates in \eqref{de2}.  This is a consequence of the previous {probabilistic} representation and of Theorem 3 in \cite{Bramanti:Cupini:Lanconelli:Priola10}. For $u$ solving \eqref{eq:OU_initial:intro} it holds
that for all $(t,z)\in [0,T]\times \R^N $,
\begin{equation}\label{PROB_REP_OU}
u(t,z)=\int_0^t \mathbb E[ f(s,e^{A(t-s)}z+I_{s,t}^{{\rm ou}})] ds=\int_{0}^t\int_{\R^N } f(s,z') p^{{\rm ou}}(t-s,z,z') dz' ds,
\end{equation}
where for $v>0$, $p^{{\rm ou}}(v,z,\cdot) $ stands for the density at time $v$ of the stochastic process
\[
X^{{\rm ou}}_u \,:= \,e^{Au}z+\sqrt 2\int_0^u \textcolor{black}{e^{A(u-w)}} B dW_w\, = \, z+\int_0^u A X^{{\rm ou}}_w dw+ \sqrt{2}BW_u,\, u\ge 0.
\]
We recall from \cite{Lanconelli:Polidoro94} that assumption \textbf{[K]} is equivalent to the fact that there exists $k\in \N $ and positive integers $(\mathfrak d_i)_{i\in\{1,\cdots, k\}}$ s.t. $\sum_{i=1}^k \mathfrak d_i=d_1 $ and for all $i\in \{1,\cdots,k\} $, setting $\mathfrak d_0=d_0 $ and $\sum_{m=0}^{-1} =0$, the matrixes
$${\mathscr A}^i\, := \, (A_{j,\ell})_{(j,\ell)\in \{\sum_{m=0}^{i-1}\mathfrak d_m+1,\cdots,\sum_{m=0}^{i}\mathfrak d_m\}\times \{\sum_{m=1}^{i-1}\mathfrak d_m +1,\cdots,\sum_{m=1}^{i}\mathfrak d_m\}},$$
have rank $\mathfrak d_i $.  {The matrix $A$ writes:
\begin{align} \label{sotto}
A \, = \,
    \begin{pmatrix}
        \ast   & \ast  & \dots  & \dots  & \ast   \\
         {\mathscr A}^1  & \ast  & \ddots & \ddots  & \vdots   \\
        0_{\mathfrak d_2,d_0}      & {\mathscr A}^2   & \ast  & \ddots & \vdots \\
        \vdots &\ddots & \ddots& \ddots & \ast \\
        0_{\mathfrak d_k,d_0}      & \dots & 0_{\mathfrak d_k,\mathfrak d_{k-1}}     & {\mathscr A}^{k}    & \ast
    \end{pmatrix}.
\end{align}
}
Following the proof of Lemma 5.5 in \cite{dela:meno:10}, where the case $d_0=d,\ \mathfrak d_i=d, k=n-1$ is addressed, it can be derived that there exists $C\ge 1$ s.t. for all $(v,z,z')\in (0,T]\times (\R^N)^2 $, 
\begin{equation}
\label{EST_DENS}
|D_{x}^2p^{{\rm ou}}(v,z,z')|\, \le \, \frac{C}{v^{\sum_{i=0}^{k-1}\mathfrak d_i (i+\frac 12)+1}}\exp\left(-C^{-1}v|\mathbb T_v^{-1}(e^{Av}z-z')|^2 \right),
\end{equation}
where $\mathfrak d_0 =d_0$ and
$$\mathbb T_v \, := \, \text{diag}(vI_{d_0\times d_0},v^2 I_{d_1\times d_1},\dots,v^{ {k+1}}I_{d_{k}\times d_{k}}), \quad v\ge0,$$
reflects the various scales of the system. For a given function $f\in B_b\left(0,T;C^\infty_0(\R^N)\right)$, it is then clear from \eqref{PROB_REP_OU} and \eqref{EST_DENS} that for all $(t,z)\in (0,T]\times \R^N $:
\begin{equation}\label{REP_SING}
D_{x}^2 u(t,z)\, = \, {\rm p.v.}\int_{0}^t\int_{\R^N } f(s,z')D_x^2 p^{{\rm ou}}(t-s,z,z') dz' ds.
\end{equation}
It indeed suffices to observe that:
\begin{gather*}
|{\rm p.v.}\int_{0}^t\int_{\R^N } f(s,z')D_x^2 p^{{\rm ou}}(t-s,z,z') dz' ds|\\
=|{\rm p.v.}\int_{0}^t\int_{\R^N } [f(s,z')-f(s,e^{A(t-s)}z)]D_x^2 p^{{\rm ou}}(t-s,z,z') dz' ds|  \\
\underset{\eqref{EST_DENS}}{\le} \sup_{s\in [0,T]}\|Df(s,\cdot)\|_\infty    \\ \times \int_{0}^t\int_{\R^N }\frac{C}{(t-s)^{\sum_{i=0}^{k-1}\mathfrak d_i (i+\frac 12)+\frac 12}}\exp\left(-C^{-1}(t-s)|\mathbb T_{t-s}^{-1}(e^{A(t-s)}z-z')|^2 \right)dz'ds\\
\le C \sup_{s\in [0,T]}\|Df(s,\cdot)\|_\infty T^{\frac 12}.
\end{gather*}
The estimates in \eqref{de2}
 now follows from the proof of Theorem 3 in \cite{Bramanti:Cupini:Lanconelli:Priola10}, starting from \eqref{REP_SING} instead of (16) therein. The strategy is clear. {It is necessary to}  introduce a cut-off function which separates the points $(s,z') $ which do not induce any singularity in \eqref{REP_SING} for the derivatives of the density, namely such that $t-s\ge c_0 $ or $|e^{A(t-s)}z-z'|\ge c_0 $, for some fixed constant $c_0>0$, from those who are close to the singularity.  For the non-singular part of the integral the expected $L^p$-control readily follows from \eqref{EST_DENS} and the Young inequality (see also Proposition 5 in \cite{Bramanti:Cupini:Lanconelli:Priola10}), whereas the derivation of the bound for the singular part requires some involved harmonic analysis, see Section 4  {on} the same reference.
  We can also refer to Theorem 11 and its proof in \cite{prio:15} for similar issues {linked with} the corresponding $L^p$-estimates for degenerate Ornstein-Uhlenbeck {operators} in an elliptic setting.

\subsection{The main result  for equation \eqref{eq:Cauchy_problem_gen}: perturbation of second order driftless PDE }

{Let us fix $p$ in $(1,+\infty)$ and  assume that} there exists $R(t) \in \R^N \otimes \R^N$ depending continuously on $t \ge 0$ and a constant $C_p>0$, such that for any  $f$ in $B_b\left(0,T;C^\infty_0(\R^N)\right)$,
the unique solution $v = PDE(Q,f)$ to {equation} \eqref{eq:Cauchy_problem_gen} satisfies
\begin{equation}\label{s223}
 \| R(t)^* D^2 v  \, R(t) \,  \|_{L^p((0,T)\times \R^N, {\mathfrak m})} \le C_p  \|   f  \|_{L^p((0,T)\times \R^N,{\mathfrak m})},
\end{equation}
{for some absolutely continuous measure $\mathfrak m $ w.r.t. the Lebesgue measure on $ [0,T]\times \R^N$  such that  $\mathfrak m(dt,dx)=g(t)dtdx $ for some Borel bounded function $g$
(note that in \eqref{2} we have $R(t) =e^{tA} {B_I} $, $\mathfrak m(dt,dx)=g(t)dtdx={\rm det}(e^{-At}) dtdx $)}.

We would like to {exhibit that a control} like \eqref{s223} {also} holds for the solution $w$ to {the following Cauchy Problem:}
\begin{equation}\label{w}
\begin{cases}
\partial_t w(t,z)  =  \text{tr}\left(Q(t)D^2 w(t,z)\right)+\text{tr}\left(Q'(t) D^2 w(t,z)\right)+f(t,z), \mbox{ on }(0,T)\times \R^N;\\
w(0,z)  = 0, \mbox{ on }\R^N,
\end{cases}
\end{equation}
Namely we have to prove \textcolor{black}{the following result}.

\begin{theorem}\label{uno} Let us consider equations  \eqref{eq:Cauchy_problem_gen} and   \eqref{w}  where   $Q(t)$, $Q'(t)$ are two  continuous in time, non-negative definite matrices in $\R^N\otimes \R^N$ and   $f \in B_b\left(0,T;C^\infty_0(\R^N)\right)$. Assume that   estimate \eqref{s223} holds as explained above.
 Then the solution $w$ to \eqref{w} verifies
\begin{equation}\label{s224}
 \| R(t)^* D^2 w  \, R(t) \,  \|_{L^p((0,T)\times \R^N, {\mathfrak m})} \le C_p  \|   f  \|_{L^p((0,T)\times \R^N, {\mathfrak m})},
\end{equation}
$p \in (1, \infty)$ with the same constant $C_p$ as in \eqref{s223}.
\end{theorem}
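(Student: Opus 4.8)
The plan is to import the Poisson process perturbation technique of \cite{Krylov:Priola17}. Since the second order perturbation $\mathrm{Tr}(Q'(t)D^2\cdot)$ has an \emph{unbounded} generator, I would first regularise it by a compound Poisson jump operator with bounded generator, prove \eqref{s224} for the regularised problem \emph{with the constant $C_p$ of} \eqref{s223} (uniformly in the regularisation parameter), and finally remove the regularisation.

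\emph{Step 1: Poisson regularisation.} Fix $\lambda>0$, let $(\tau_k)_{k\ge 1}$ be the jump times of a Poisson process of rate $\lambda$, and let $(\xi_k)_{k\ge1}$ be i.i.d.\ standard Gaussian vectors in $\R^N$, independent of $(\tau_k)$ and of the Brownian motion $W$ defining $I_{s,\cdot}$. I would consider the integro-differential Cauchy problem with $w_\lambda(0,\cdot)=0$ and
\[
\partial_t w_\lambda(t,z)=\mathrm{Tr}(Q(t)D^2w_\lambda(t,z))+\lambda\Big(\mathbb E_\xi\big[w_\lambda\big(t,z+\sqrt{2/\lambda}\,Q'(t)^{1/2}\xi\big)\big]-w_\lambda(t,z)\Big)+f(t,z),
\]
where the scaling $1/\lambda$ is tuned so that the jump part generates, in the limit $\lambda\to\infty$, the operator $\mathrm{Tr}(Q'(t)D^2\cdot)$. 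Its well-posedness in the integral sense follows as in Proposition \ref{PROP_WP_DRIFTLESS}, the jump term being a bounded perturbation, and the associated Feynman--Kac representation is
\[
w_\lambda(t,z)=\int_0^t\mathbb E\Big[f\big(s,\,z+I_{s,t}+\eta^\lambda_t-\eta^\lambda_s\big)\Big]\,ds,\qquad \eta^\lambda_u:=\sum_{k:\,\tau_k\le u}\sqrt{2/\lambda}\,Q'(\tau_k)^{1/2}\xi_k .
\]
A Taylor expansion gives $\lambda\big(\mathbb E_\xi[\phi(\cdot+\sqrt{2/\lambda}Q'(t)^{1/2}\xi)]-\phi\big)\to\mathrm{Tr}(Q'(t)D^2\phi)$ for $\phi\in C^\infty_0(\R^N)$, and $\eta^\lambda$ converges in law, uniformly on $[0,T]$, to the Gaussian process $J_u=\sqrt2\int_0^uQ'(r)^{1/2}d\widetilde W_r$; hence $w_\lambda\to w$ and $D^2w_\lambda\to D^2w$ pointwise, where $w=PDE(Q+Q',f)$ is the solution of \eqref{w}.

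\emph{Step 2: conditioning on the Poisson data --- the crucial step.} Freeze a realisation of $(\tau_k,\xi_k)_k$, so that $u\mapsto\eta^\lambda_u$ is a fixed, piecewise constant, c\`adl\`ag path with finitely many jumps on $[0,T]$. Write $V^{\eta}(t,z):=\int_0^t\mathbb E_W[f(s,z+I_{s,t}+\eta_t-\eta_s)]\,ds$, so that $w_\lambda=\mathbb E\,[V^{\eta^\lambda}]$ and, differentiating under the integral, $D^2w_\lambda=\mathbb E\,[D^2V^{\eta^\lambda}]$. The substitution $\zeta=z+\eta_t$ yields $V^{\eta}(t,z)=\widehat V^{\eta}(t,z+\eta_t)$ with
\[
\widehat V^{\eta}(t,\zeta)=\int_0^t\mathbb E_W\big[f_\eta(s,\zeta+I_{s,t})\big]\,ds,\qquad f_\eta(s,\cdot):=f(s,\cdot-\eta_s).
\]
Since $\eta$ is piecewise constant, $f_\eta$ is piecewise continuous in time and again belongs to $B_b\big(0,T;C^\infty_0(\R^N)\big)$ --- this is exactly the reason that class was introduced. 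Hence $\widehat V^{\eta}=PDE(Q,f_\eta)$ solves the \emph{unperturbed} driftless problem \eqref{eq:Cauchy_problem_gen}, and the standing hypothesis \eqref{s223} applies to it. Using $D^2V^{\eta}(t,z)=D^2\widehat V^{\eta}(t,z+\eta_t)$, the translation invariance in $z$ of $\mathfrak m(dt,dz)=g(t)\,dt\,dz$, and $\|f_\eta(t,\cdot)\|_{L^p(dz)}=\|f(t,\cdot)\|_{L^p(dz)}$, one gets, for \emph{every} realisation of $\eta$,
\[
\|R(t)^*D^2V^{\eta}\,R(t)\|_{L^p((0,T)\times\R^N,\mathfrak m)}=\|R(t)^*D^2\widehat V^{\eta}\,R(t)\|_{L^p((0,T)\times\R^N,\mathfrak m)}\le C_p\,\|f\|_{L^p((0,T)\times\R^N,\mathfrak m)} .
\]

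\emph{Step 3: averaging, passage to the limit, and the main obstacle.} Taking expectation over the Poisson configuration and applying Minkowski's integral inequality ($p>1$),
\[
\|R(t)^*D^2w_\lambda\,R(t)\|_{L^p(\mathfrak m)}=\big\|\,\mathbb E\,[R(t)^*D^2V^{\eta^\lambda}R(t)]\,\big\|_{L^p(\mathfrak m)}\le\mathbb E\,\big\|R(t)^*D^2V^{\eta^\lambda}R(t)\big\|_{L^p(\mathfrak m)}\le C_p\,\|f\|_{L^p(\mathfrak m)},
\]
uniformly in $\lambda$; letting $\lambda\to\infty$ and invoking Fatou's lemma together with Step 1 yields \eqref{s224} with the same $C_p$. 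I expect the conceptual heart to be Step 2 --- the observation that, after conditioning on the jump configuration and translating space by $\eta_t$, the relevant function solves the \emph{unperturbed} equation with a source that is merely piecewise continuous in time, which is precisely what makes the constant survive. The main technical nuisance is Step 1: rigorously justifying $w_\lambda\to w$ and $D^2w_\lambda\to D^2w$, and the interchanges of $D^2$ with the various expectations and time integrals; this should be routine given the explicit Gaussian/Poisson representations and the smoothness and compact support of $f$.
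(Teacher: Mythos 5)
Your argument is correct in substance, and it shares the paper's central mechanism --- shift the source along a Poisson-driven, piecewise constant path, use that the shifted source is still in $B_b\left(0,T;C^\infty_0(\R^N)\right)$ (this is exactly why piecewise continuity in time was built into that class) together with spatial translation invariance of the $L^p((0,T)\times\R^N,\mathfrak m)$-norms to apply \eqref{s223} pathwise with the same $C_p$, then average --- but the way you recover the second order perturbation is genuinely different from Section \ref{SEC_PERTURBATIVO}. The paper treats one direction at a time: it runs the shift argument twice per direction to produce the symmetric finite difference in \eqref{eq:3_OU}, identifies the PDE satisfied by the averaged function via Lemma \ref{LEMME_POISSON} (cf. \eqref{eq:2_OU}), passes to the limit $\epsilon\to 0$ by an Arzel\`a--Ascoli compactness argument to reach \eqref{eq:3_OUf}, and iterates $N$ times to get \eqref{eq:3_OUff}; your Gaussian jump amplitudes $\sqrt{2/\lambda}\,Q'(t)^{1/2}\xi$ are mean-zero and carry the full covariance, so a single regularisation handles all of $Q'$ at once, and you never need the limiting equation for the averaged function: the uniform estimate comes from conditioning plus Minkowski (the analogue of \eqref{w11} followed by the paper's Jensen--Fubini computation), and the identification of the limit comes from the explicit representation \eqref{s55} applied with $Q+Q'$ and convergence in law of the compound Poisson process, with Fatou closing the estimate. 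What your route buys is brevity and the elimination of the compactness step and the $N$-fold iteration; indeed, since everything rests on the Gaussian/Poisson representations, you could even condition directly on the Gaussian perturbation $J_u=\sqrt2\int_0^u Q'(r)^{1/2}d\widetilde W_r$ (whose paths are continuous, hence the shifted source is still admissible) and dispense with the Poisson regularisation altogether --- your integro-differential equation in Step 1 is in fact never used, only the representation of $w_\lambda$ is. What the paper's route buys is robustness: it uses only the assumed estimate for the proxy equation, the maximum principle and compactness, without relying on explicit probabilistic formulas for the perturbed problem, which is why the same scheme transfers verbatim to the Schauder and anisotropic Sobolev settings of Section \ref{ESTENSIONI}. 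The technical points you defer (well-posedness of the regularised problem, differentiation under expectations, joint measurability for Minkowski/Fubini, $w_\lambda\to w$ and $D^2w_\lambda\to D^2w$ pointwise) are indeed routine given that $f(t,\cdot)$ is smooth with supports in a fixed ball, so I see no genuine gap.
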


From Theorem \ref{uno} using the argument of Section 1.1 we can easily derive Theorem \ref{d44}.

\mysection{A perturbation  argument for  proving Theorem \ref{uno}}
We aim here at applying the probabilistic perturbative approach considered in \cite{Krylov:Priola17}. The key  idea in that work was, for a well-posed PDE which enjoys some quantitative given estimates, to introduce a \textit{small} random perturbation in the source $f$  through a suitable  Poisson type process  and to investigate the properties of the associated PDE involving an unknown function $v$.
 After considering  a  {small} random perturbation of $v$,
 we arrive at the useful integral formula \eqref{eq:1_OU}.  Taking the expectation
 the contributions  associated with the jumps yield, for an appropriate intensity of the underlying Poisson process, a finite difference operator.
 For the PDE satisfied by the expectation,  involving the finite difference operator, {\it the  initial estimates are preserved}.
 Repeating the previous argument we can obtain a PDE  involving the composition of two  finite difference operators.

 Compactness arguments  then allow to derive that, the initial estimates still hold at the limit with the  composition of two finite difference operators  replaced by the corresponding differential \textcolor{black}{operator} of order two. Iterating  this procedure we can obtain the result.

Below, we start recalling  basic properties of  Poisson type processes and   corresponding stochastic integrals, which  are needed for our approach.

\subsection{Poisson stochastic integrals}

We {briefly} recall here the very definition of {the stochastic}
 integral     {driven by a Poisson process}.
We start reminding the construction of {such} processes.
\newline
Given a probability space $(\Omega,\mathcal{F},\mathbb{P})$ to be fixed from this point further,  we start considering a sequence of independent real-valued random variables $\{\tau_n\}_{n \in \N}$ on $\Omega$ whose distribution is exponential of parameter $\lambda>0$:
\[\mathbb{P}(\tau_n>r) \,= \, e^{-r\lambda},  \quad r\ge 0.\]
We can then define the partial sums sequence $\{\sigma_n\}_{n \in \N}$  as follows:
\begin{align*}
    \sigma_0\, = \, 0; \;\;\;
    \sigma_n  =   \sum_{i=1}^n \tau_i, \quad n=1,2, \dots
\end{align*}
\textcolor{black}{For any fixed} $t\ge 0$, $\pi_t$ now \textcolor{black}{denotes} the number of consecutive sums of $\tau_i$ which lie on $[0,t]$, i.e.
\begin{equation}
\label{poisson}
  \pi_t=\sum_{n=0}^{\infty} \mathds{1}_{\sigma_n \le t},
\end{equation}
where $\mathds{1}_{\sigma_n \le t}$ represents the indicator function of the event $\{\sigma_n\le t\}$. The process $\{\pi_t\}_{t\ge 0}$ we have just constructed is usually known in the literature as a \emph{Poisson process} with intensity $\lambda$ (see, for instance, \cite{book:Protter04}).

Now, let $c : [0,T] \to \R^N$ be a continuous function.  We can define the  Poisson stochastic integral as
\begin{gather} \label{s25}
b_t \, :=  \,\int_0^t c (s) d\pi_s \,= \, \sum_{\sigma_k \le t,\;\; k \ge 1}  {c(\sigma_k)}= \sum_{0 < s \le t}  {c(s)} (\pi_s - \pi_{s-}),\;\;\; t \in [0,T],
\end{gather}
 $b_0=0$ (as usual $\pi_{s-}(\omega)$ denotes the left limit at $s$, for any $\omega$, $\P$-a.s.).  We now recall the following formula for the expectation of the stochastic integral:
\begin{equation}\label{see}
\E [\int_0^t c (s) d\pi_s]  = \lambda \int_0^t c (s) ds \in \R^N.
\end{equation}
 (cf.  Lemma 2.1  in \cite{Krylov:Priola17} for a direct proof; see also   Theorem 16 in \cite{book:Protter04} and Theorem 5.3 in \cite{Krylov:Priola17} for a more general formula involving stochastic integrals of  predictable processes against the Poisson process).
We also recall  the following more general  result.

\vskip 1mm

\begin{lemma} \label{LEMME_POISSON}
Let $\{\pi_t\}_{t\ge0}$ be a Poisson Process of intensity $\lambda$ on a probability space $(\Omega,\mathcal{F},\mathbb{P}) $.
Let us consider a stochastic process $ (\xi_t )_{t \in [0,T]} $ with values in $\R$ which has c\`adl\`ag  paths ($\P$-a.s.) and is ${\cal F}_t$-adapted where
 ${\cal F}_t$ is the  completed $\sigma$-algebra generated by the random variables $\pi_s$, $0 \le s \le t$.  Suppose that $\sup_{\omega \in \Omega, \; s \in [0,T]} |\xi_ s(\omega)| < \infty $. Then
 \begin{equation}\label{rt}
\mathbb E\int_0^t \xi_{s-} \, d\pi_s=\lambda \int_0^t \mathbb E\xi_s ds.
\end{equation}
\end{lemma}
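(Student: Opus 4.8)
The plan is to prove Lemma \ref{LEMME_POISSON} by a discretization (Riemann-sum) argument together with the elementary expectation formula \eqref{see}, exploiting adaptedness and boundedness of $\xi$. First I would reduce to a step-process approximation: since $\xi$ is c\`adl\`ag and ${\cal F}_t$-adapted, for each $n$ set $t_k^n = kt/2^n$ and define $\xi^n_s := \xi_{t_k^n}$ for $s \in (t_k^n, t_{k+1}^n]$ (and $\xi^n_0 = \xi_0$). Then $\xi^n_{s-}$ is a left-continuous step process, ${\cal F}_s$-adapted, and by right-continuity of the paths $\xi^n_{s-} \to \xi_{s-}$ pointwise as $n \to \infty$, $\P$-a.s. and for every $s \in (0,T]$. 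The uniform bound $M := \sup_{\omega, s}|\xi_s(\omega)| < \infty$ controls everything.

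Second, for the step process the identity \eqref{rt} is essentially \eqref{see} applied piecewise. Writing $\int_0^t \xi^n_{s-}\, d\pi_s = \sum_k \xi_{t_k^n}(\pi_{t_{k+1}^n \wedge t} - \pi_{t_k^n \wedge t})$, I would take expectations block by block: conditioning on ${\cal F}_{t_k^n}$, the increment $\pi_{t_{k+1}^n\wedge t} - \pi_{t_k^n\wedge t}$ is independent of ${\cal F}_{t_k^n}$ with mean $\lambda\,(t_{k+1}^n\wedge t - t_k^n\wedge t)$ (the independent-increments and stationarity property of the Poisson process), so $\E[\xi_{t_k^n}(\pi_{t_{k+1}^n\wedge t} - \pi_{t_k^n\wedge t})] = \lambda\,\E[\xi_{t_k^n}]\,(t_{k+1}^n\wedge t - t_k^n\wedge t)$. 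Summing gives $\E\int_0^t \xi^n_{s-}\, d\pi_s = \lambda \int_0^t \E[\xi^n_s]\, ds$ (recognizing the right side as the integral of the step function $s \mapsto \E[\xi^n_s]$, which equals $\E\xi^n_{s-}$ off the finitely many breakpoints).

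Third, I would pass to the limit $n \to \infty$ on both sides. On the right, $\E[\xi^n_s] \to \E[\xi_s]$ for each $s$ by dominated convergence (bound $M$, path right-continuity), and then $\int_0^t \E[\xi^n_s]\, ds \to \int_0^t \E[\xi_s]\, ds$ again by dominated convergence in $s$. On the left I need $\E\int_0^t \xi^n_{s-}\, d\pi_s \to \E\int_0^t \xi_{s-}\, d\pi_s$; since the stochastic integral against $\pi$ is a pathwise finite sum, $\int_0^t(\xi^n_{s-} - \xi_{s-})\, d\pi_s = \sum_{0 < \sigma_k \le t}(\xi^n_{\sigma_k-} - \xi_{\sigma_k-})$, and each summand tends to $0$ a.s.\ while the whole sum is dominated in absolute value by $2M\,\pi_t$, which is integrable; dominated convergence then closes the argument. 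One should also note that the integral $\int_0^t\xi_{s-}\,d\pi_s$ is well-defined and integrable for the same reason ($|\cdot| \le M\pi_t \in L^1$), so \eqref{rt} is meaningful.

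The main obstacle is the interchange of limit and expectation on the stochastic-integral side: one must be careful that $\xi^n_{s-}$ converges to $\xi_{s-}$ at the (random) jump times $\sigma_k$ of $\pi$, which follows because the $\sigma_k$ are a.s.\ distinct from the deterministic grid points and from each other, and $\xi$ has left limits there — but it genuinely uses path regularity of $\xi$ rather than just boundedness. A cleaner alternative, which avoids any subtlety about convergence at jump times, is to invoke the general formula for $\E$ of a Poisson stochastic integral of a bounded predictable integrand (Theorem 5.3 in \cite{Krylov:Priola17}, cited in the excerpt): a bounded c\`adl\`ag adapted process has a predictable left-continuous version $\xi_{s-}$, and the formula there gives \eqref{rt} directly. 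I would present the discretization proof as the self-contained route and remark that it also follows from the cited general result.
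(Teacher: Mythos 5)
Your discretization argument is correct and self-contained, but note that the paper does not actually prove Lemma \ref{LEMME_POISSON}: it is \emph{recalled} as a known result, the surrounding text pointing to Lemma 2.1 and Theorem 5.3 of \cite{Krylov:Priola17} (and Theorem 16 of \cite{book:Protter04}) for the expectation formula for Poisson integrals of predictable integrands --- i.e.\ the paper takes exactly the ``cleaner alternative'' you mention in your last paragraph. So the comparison is: your Riemann-sum proof buys self-containedness (only independence and stationarity of Poisson increments plus dominated convergence are used), at the cost of a limiting argument; the citation route buys brevity but rests on the general predictable-integrand machinery.

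One small repair to your write-up: with the left-endpoint approximation $\xi^n_s=\xi_{t^n_k}$ on $(t^n_k,t^n_{k+1}]$, the grid point lies strictly to the left of $s$, so the pointwise limit of $\xi^n_s$ (and of $\xi^n_{s-}$) is $\xi_{s-}$, by existence of left limits --- not $\xi_s$ by right-continuity as you state. This is harmless for the conclusion: on the stochastic-integral side it is precisely $\xi_{\sigma_j-}$ that you need (and the convergence at the jump times then holds for every c\`adl\`ag path, with no need to argue that the $\sigma_j$ avoid the grid points), while on the right-hand side a c\`adl\`ag path has at most countably many discontinuities, so by Fubini and the uniform bound $\mathbb E\,\xi_{s-}=\mathbb E\,\xi_s$ for Lebesgue-a.e.\ $s$, and hence $\int_0^t\mathbb E[\xi^n_s]\,ds\to\int_0^t\mathbb E[\xi_s]\,ds$ as required. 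With that wording fixed, the proof is complete.
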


%
%

\subsection{Proof of Theorem \ref{uno}}
 \label{SEC_PERTURBATIVO}

 According to the notations in  Proposition \ref{PROP_WP_DRIFTLESS}, let  $v=PDE(Q ,f)$ and  $w=PDE(Q+Q' ,f) $ be  the unique solutions
 of \textcolor{black}{equations} \eqref{eq:Cauchy_problem_gen} and \eqref{w}, respectively.\newline
 \textcolor{black}{The proof of Theorem \ref{uno} will be obtained adapting} the method developed in \cite{Krylov:Priola17} (see in particular Section $3$, therein).
Let $e_1$ be the first unit vector in $\R^N$. We define
$$
 X_t = \int_{0}^t \sqrt{Q'(r)} \, e_1 d \pi_r
$$
where $\sqrt{Q'(t)}$ is the unique $N \times N$ symmetric non-negative definite square root of $Q'(t)$ and  $\{\pi_t\}_{t\ge0}$ is a Poisson Process of intensity $\lambda$ (cf.
 \eqref{s25}). The parameter $\lambda$ will be chosen {appropriately} later on.

Recall that the solution  $v$ to \eqref{eq:Cauchy_problem_gen} is given by
\begin{equation}\label{s55}
v(t,z) = \int_0^t ds\int_{\R^N} [f(s,z+ z') \mu_{\textcolor{black}{s,t}} (dz')
\end{equation}
where $\mu_{\textcolor{black}{s,t}}$ is the Gaussian law of the stochastic integral $I_{s,t}:=\sqrt 2\int_s^t \sqrt{Q(r)} \, dW_r$ (see the proof of Proposition \ref{PROP_WP_DRIFTLESS}).

Let us fix  $\epsilon>0$.  We notice   that the shifted source $f_\epsilon(t,z):=f(t,z-\epsilon X_t)$ (which also depends on $\omega$; we have omitted to write  such dependence on $\omega$) is again in $B_b\left(0,T;C^\infty_0(\R^N)\right)$.  \textcolor{black}{This is} the reason why we have considered such a function space for the source. \textcolor{black}{It precisely allows to take into account the time discontinuities coming from the jumps of the Poisson process}.

For any fixed $\omega$ in $\Omega$, Proposition \ref{PROP_WP_DRIFTLESS} readily gives that there exists a unique  solution $v_\epsilon=$  PDE$(Q,f(t,z-\epsilon X_t))$, depending  on $\epsilon$ and  $\omega$ as parameters, such that
\begin{align}
\label{proof:estimate1}
\sup_{(t,z)\in [0,T]\times \R^N}| v_\epsilon(t,z)| \, &\le \, T \sup_{(t,z)\in [0,T]\times \R^N}| f(t,z)|.
\end{align}
Moreover,  thanks to the invariance for translations of the $L^p$-norms, \textcolor{black}{it follows from \eqref{s223}} that
\begin{equation}\label{w11}
\| R(t)^* D^2 v_{\epsilon}  \, R(t) \,  \|_{L^p((0,T)\times \R^N,\textcolor{black}{\mathfrak m})} \le C_p  \|   f_{\epsilon}  \|_{L^p((0,T)\times \R^N,\textcolor{black}{\mathfrak m})} = C_p \|   f  \|_{L^p((0,T)\times \R^N,\textcolor{black}{\mathfrak m})}.
\end{equation}
 By \textcolor{black}{Equation} \eqref{s55}, we know that $v_{\epsilon} $ is given by
\begin{gather*}
v_{\epsilon}(t,z) = \int_0^t ds\int_{\R^N} [f(s,z - \epsilon X_s + z') \mu_{\textcolor{black}{s,t}} (dz').
\end{gather*}
For each $z \in \R^N$, the stochastic process  $ (\textcolor{black}{v_{\epsilon}}(t,z))_{t \in [0,T]} $ has continuous paths ($\P$-a.s.) and it is ${\cal F}_t$-adapted where
 ${\cal F}_t$ is the completed  $\sigma$-algebra generated by the random variables $\pi_s$, $0 \le s \le t$.

For  fixed $z \in \R^N$, $\epsilon >0,$ let us introduce the process $(\textcolor{black}{v_\epsilon}(t,z+\epsilon X_t))_{t \in [0,T]}$ which is given by
$$
 v_{\epsilon}(t,z+\epsilon X_t) = \int_0^t ds\int_{\R^N} [f(s,z +\epsilon X_t - \epsilon X_s + z') \mu_{\textcolor{black}{s,t}} (dz').
$$
\textcolor{black}{It is not difficut to check that it is}   ${\cal F}_t$-adapted and  \textcolor{black}{it has  c\`adl\`ag paths}.

Applying \eqref{INTEGRAL} on each interval $[\sigma_n,\sigma_{n+1}\wedge t ), n\in \{0,\cdots, \pi_t] $ on which $X_s$ is constant,  one then derives that:
\begin{equation}
\label{eq:1_OU}
v_\epsilon(t,z+\epsilon X_t) \, = \, \int_0^t\left(\text{tr}(Q(s)D^2 v_\epsilon(s,z+\epsilon X_s))+ f(s,z)\right)\, ds +\int_0^t g_\epsilon(s,z) \,d\pi_s,
\end{equation}
where $g_\epsilon(s,z)=v_\epsilon(s,z+  \epsilon \sqrt{Q'(s)} \, e_1 + \epsilon X_{s-})-v_\epsilon(s,z+\epsilon X_{s-})$ is precisely the contribution associated with the jump  times. It is clear that $g_\epsilon(s,z)\neq 0$ if and only if $\pi_s$ has a jump at time $s$.
We then have by Lemma \ref{LEMME_POISSON}:
\begin{equation}
\label{proof:eq1}
    \mathbb{E}\int_0^tg_\epsilon(s,z)\, d\pi_s \, = \, \lambda \int_0^t\left(\bar v_\epsilon(s,z+\epsilon \sqrt{Q'(s)} \, e_1)- \bar v_\epsilon(s,z)\right)\, ds,
\end{equation}
where $\bar v_\epsilon(s,z) = \mathbb{E}[v_\epsilon(s,z+\epsilon X_s)]$. Let \textcolor{black}{us denote}
$$
l(t) := \sqrt{Q'(t)} \, e_1.
$$
Taking the expectation on both sides of equation \eqref{eq:1_OU}, we find out that $\bar v_\epsilon$ is an integral solution of the following PDE:
\begin{equation}
\label{eq:2_OU}
    \partial_t \bar v_\epsilon(t,z) \, = \, \text{tr}(Q(t)D^2 \bar v_\epsilon(t,z))+\lambda\left(\bar v_\epsilon(t,z+\epsilon l(t))-\bar v_\epsilon(t,z)\right)+f(t,z),
\end{equation}
with zero initial condition.  Remark that uniqueness of bounded continuous solutions to \eqref{eq:2_OU} follows by the maximum principle,
arguing
as in the proof of Lemma 2.2 in \cite{Krylov:Priola17} (first one considers the case  $ \lambda T \le 1/4$ and then one iterates the procedure by steps of size $1/(4 \lambda)$).

Moreover by \eqref{w11} we obtain (using also the Jensen inequality and  the Fubini theorem)
\begin{gather*}
\| R(t)^* D^2 \bar v_{\epsilon}  \, R(t) \,  \|_{L^p((0,T)\times \R^N,\textcolor{black}{\mathfrak m})}^p
= \int_{(0,T) \times \R^N} |R(t)^* D^2 \bar v_{\epsilon} (t,z) \, R(t)|^p \textcolor{black}{\mathfrak m(dt,dz)}
\\
= \int_{(0,T) \times \R^N} | \E [R(t)^* D^2 v_{\epsilon} (t,z + \epsilon X_t) R(t)] \,|^p dz g(t)dt
\\
\le   \int_{[0,T] \times \R^N} \E [ | R(t)^* D^2 v_{\epsilon} (t,z + \epsilon X_t) R(t) \,|^p] dz g(t)dt
\\
= \E \int_{[0,T] \times \R^N}   | R(t)^* D^2 v_{\epsilon} (t,z + \epsilon X_t) R(t) \,|^p dz g(t)dt
\\
 = \textcolor{black}{\mathbb E}\int_{[0,T] \times \R^N}  | R(t)^* D^2 v_{\epsilon} (t,\bar z ) R(t) \,|^p d\bar z g(t)dt
  \\
\le C_p^p  \|   f  \|_{L^p((0,T)\times \R^N,\textcolor{black}{\mathfrak m})}^p,
\end{gather*}
using \eqref{w11} for the last inequality \textcolor{black}{($L^p$-estimate for the PDE with random source)}.
Choosing $\lambda = \epsilon^{-2}$ we have \textcolor{black}{
from \eqref{eq:2_OU}
}
 \begin{equation}
\label{eq:2_OUe}
    \partial_t \bar v_\epsilon(t,z) \, = \, \text{tr}(Q(t)D^2\bar v_\epsilon(t,z))+\ \epsilon^{-2}\left(\bar v(t,z+\epsilon l(t))-\bar v_\epsilon(t,z)\right)+f(t,z),
\end{equation}
with zero initial condition and moreover
\begin{equation}\label{d3}
\| R(t)^* D^2 \bar v_{\epsilon}  \, R(t) \,  \|_{L^p((0,T)\times \R^N, {\mathfrak m} )}^p  \le C_p^p  \|   f  \|_{L^p((0,T)\times \R^N, {\mathfrak m} )}^p.
\end{equation}
{Now the idea is to apply again the same reasoning above to the equation \eqref{eq:2_OUe} with respect to $\bar v_\epsilon$, using   $f(t,z+\epsilon X_t)$}  again with  $\lambda=\epsilon^{-2}$. We obtain first a solution $p_{\epsilon}$ to \eqref{eq:2_OUe} corresponding to $f(t,z+\epsilon X_t)$ and then \textcolor{black}{derive that}
$$
w_{\epsilon}(t,z) = \E [p_{\epsilon }(t , z - \epsilon X_t)]
$$
is the  unique bounded continuous (integral) solution $w_\epsilon$ of the following problem:
\begin{equation}
    \label{eq:3_OU}
\begin{cases}
\partial_t w_\epsilon(t,z) \,  = \, \text{tr}(Q(t)D^2 w_\epsilon(t,z)) \\ +\epsilon^{-2}\left(w_\epsilon(t,z+\epsilon l(t))-2w_\epsilon(t,z)+w_\epsilon(t,z-\epsilon l(t))\right) +     f(t,z),\\
w_\epsilon(0,z) \, = \, 0.
\end{cases}
\end{equation}
  The previous estimates  still hold  with $w_\epsilon$ instead of $v_\epsilon$, i.e.,
\begin{gather}
\label{proof:estimate3_BIS}
\sup_{(t,z)\in[0,T]\times \R^N} |w_\epsilon(t,z)| \, \le \, T\sup_{(t,z)\in[0,T]\times \R^N} |f(t,z)|;
\\
\label{proof:estimate4_BIS}
    \| R(t)^* D^2 w_{\epsilon}  \, R(t) \,  \|_{L^p((0,T)\times \R^N,\textcolor{black}{\mathfrak m})}  \le C_p  \|   f  \|_{L^p((0,T)\times \R^N,\textcolor{black}{\mathfrak m})}.
\end{gather}
We would like now to let $\epsilon$ goes to zero,
possibly passing to a subsequence $\epsilon_n \to 0$,
and prove that the associated limit $w$ solves
\begin{equation}
    \label{eq:3_OUf}
\begin{cases}
\partial_t w(t,z) \, = \, \text{tr}(Q(t)D^2 w(t,z))+ \langle D^2 w (t,z) \sqrt{Q'(t)}e_1,  \sqrt{Q'(t)}e_1 \rangle +f(t,z),\\
w(0,z) \, = \, 0
\end{cases}
\end{equation}
and  estimates \eqref{proof:estimate3_BIS} and \eqref{proof:estimate4_BIS} \textcolor{black}{hold} with $w_{\epsilon}$ replaced by $w$.

    To do so we will proceed by compactness. Namely, we are going to prove that the family of solutions $w_\epsilon$ solving \eqref{eq:3_OU}, indexed by the parameter $\epsilon$, is equi-Lipschitz on any compact subset of $[0,T]\times \R^N$ and the same holds for
  any derivative in space of $w_\epsilon$. Indeed, one can apply the finite difference operators with respect to $z$ at any order in \eqref{eq:3_OU}.
We recall that  for a \textit{smooth} function $\phi\colon \R^N\to \R$, the first finite difference $\delta_{h,i}\phi$, $i\in \{1,\cdots,N\} $ of step $h >0$ in the direction $e_i$ ($i^{\rm th}$ basis vector) is
given by
\[\delta_{h,i}\phi(z) \, = \, \frac{\phi(z+he_i)-\phi(z)}{h}, \quad z \in \R^N.\]For a given multi-index $\gamma \in \N^N$,  the $\gamma$-th order finite difference operator $\delta_{h,\gamma}$, is then defined, for any $h>0$, through composition. Namely,
\[\delta_{h,\gamma}\phi(z) \, = \, \delta^{\gamma_1}_{h,1}\delta^{\gamma_2}_{h,2}\dots \delta^{\gamma_N}_{h,N}\phi(z),\]
where $\delta^{\gamma_i}_{h,i}$ {denotes} the $\gamma_i$-th times composition of $\delta_{h,i}$ with itself.
\newline
 Since  any spatial derivative of $f$ belongs to $B_b\left(0,T;C^\infty_0(\R^N)\right) $,
 using  \eqref{proof:estimate3_BIS}   we deduce first that any finite difference of any order of $ w_\epsilon$ is bounded. Consequently, $ w_\epsilon$ is infinitely differentiable in space with bounded derivatives on $[0,T]\times \R^N $. Equation \eqref{eq:3_OU}, to be understood in its integral form similarly to \eqref{INTEGRAL}, then gives that those derivatives are themselves Lipschitz continuous in time (uniformly in the space variable).  This precisely gives the equi-Lipschitz on any compact subset of $[0,T]\times \R^N$ of the family $w_\epsilon$ and of any spatial derivative of $w_{\epsilon}$.
\newline
We can now apply the Arzel\`a-Ascoli theorem to $w_\epsilon$ showing the existence of a sub-sequence $\{w_{\epsilon_n}\}_{n \in \N}$ which converges uniformly on any compact set to a function $w\colon [0,T]\times \R^N\to \R$. Similarly,
 any  derivative in space of $w_{\epsilon_n}$ tends to the respective derivatives of $w$, uniformly on the compact sets.\newline
Passing to the limit as $n\to \infty$ along the sequence $(\epsilon_n)_n $  in equation \eqref{eq:3_OU}  (written in the  integral form), we can then conclude that $w$ solves \eqref{eq:3_OUf}.

Moreover,  estimates \eqref{proof:estimate3_BIS} and \eqref{proof:estimate4_BIS} holds with $w_{\epsilon}$ replaced by $w$.
 Iterating the previous argument in $N$  steps we finally prove that the unique solution $w$ to
 \begin{equation}
    \label{eq:3_OUff}
\begin{cases}
\partial_t w(t,z)  =  \text{tr}(Q(t)D^2 w(t,z))+  \sum_{k=1}^N\langle D^2 w (t,z) \sqrt{Q'(t)}e_k,  \sqrt{Q'(t)}e_k \rangle  +f(t,z),\\
w(0,z) \, = \, 0
\end{cases}
\end{equation}
verifies   estimates \eqref{proof:estimate3_BIS} and \eqref{proof:estimate4_BIS}  with $w_{\epsilon}$ replaced by $w$.
 The proof is complete.  \qed



\def\ciao{

%

\begin{prop}
\label{prop:link_ou_diffusive}
Let $f$ be in $B_b\left(0,T;C^\infty_0(\R^N)\right)$. Then, a function $u$ is the unique bounded, continuous solution of \textcolor{black}{the} Cauchy Problem \eqref{eq:def_ou_operator} if and only if the function $\tilde{u}$ given by $\tilde{u}(t,x):=u(t,e^{tA}x)$ is the unique bounded, continuous solution of the following:
\begin{equation}
\label{eq:PDE}
    \begin{cases}
    \partial(t) \tilde{u}(t,x) \, = \, \text{tr}\left(e^{-tA}Be^{-tA^\ast}D^2_x\tilde{u}(t,x)\right) +\tilde{f}(t,x), &(t,x)\in (0,T]\times \R^N;\\
    \tilde{u}(0,x) \, = \, 0, &x\in \R^N;
    \end{cases}
\end{equation}
where the function $\tilde{f}$ is given by $\tilde{f}(t,x)=f(t,e^{tA}x)$.
\end{prop}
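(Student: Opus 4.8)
The statement is the classical drift-removal change of variables for Ornstein--Uhlenbeck operators (cf.\ \cite{daprato-Lunardi}), recast as an equivalence between two well-posed Cauchy problems. The plan is to transport solutions explicitly along the linear flow $e^{tA}$ so as to kill the first-order term, and then to invoke the maximum principle on each side to upgrade the correspondence of solutions into a correspondence of \emph{unique} solutions.

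\emph{Forward implication.} Let $u$ be a bounded continuous solution of \eqref{eq:def_ou_operator} in the integral sense of \eqref{int2}; thus $u(t,\cdot)\in C^2(\R^N)$ for every $t$ and, for each fixed $z$, the map $t\mapsto u(t,z)$ is piecewise $C^1$ on $[0,T]$. Set $\tilde u(t,x):=u(t,e^{tA}x)$. Since $x\mapsto e^{tA}x$ is a linear isomorphism depending smoothly on $t$, with $e^{tA}$ and $e^{-tA}$ uniformly bounded on the compact interval $[0,T]$, the function $\tilde u$ is again bounded and continuous, satisfies $\tilde u(0,\cdot)=0$, has $\tilde u(t,\cdot)\in C^2(\R^N)$, and $t\mapsto\tilde u(t,x)$ is again piecewise $C^1$. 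On each subinterval of $[0,T]$ where $t\mapsto u(t,z)$ is $C^1$ I differentiate in time and apply the chain rule, obtaining $\partial_t\tilde u(t,x)=(\partial_t u)(t,e^{tA}x)+\langle Du(t,e^{tA}x),Ae^{tA}x\rangle$. Inserting the pointwise form of \eqref{eq:def_ou_operator} at $z=e^{tA}x$, the first-order term coming from the drift cancels against the one produced by differentiating the flow $e^{tA}x$ --- this is exactly the commutation $Ae^{tA}=e^{tA}A$ --- and leaves $\partial_t\tilde u(t,x)={\rm tr}(B\,D^2u(t,e^{tA}x))+f(t,e^{tA}x)$. Since $u(t,\cdot)=\tilde u(t,e^{-tA}\cdot)$ we have $D^2u(t,e^{tA}x)=e^{-tA^*}D^2\tilde u(t,x)\,e^{-tA}$, hence ${\rm tr}(B\,D^2u(t,e^{tA}x))={\rm tr}(e^{-tA}Be^{-tA^*}D^2\tilde u(t,x))$; integrating back on $[0,t]$ and gluing the subintervals gives precisely the integral form of \eqref{eq:PDE} with $\tilde f(t,x)=f(t,e^{tA}x)$. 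Finally $\tilde f\in B_b(0,T;C^\infty_0(\R^N))$, because composing with the uniformly bi-Lipschitz family $e^{tA}$ preserves smoothness, the uniform-in-time bounds on supports and on $C^n$-norms, and piecewise continuity in $t$.

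\emph{Converse, uniqueness, and conclusion.} If instead $\tilde u$ solves \eqref{eq:PDE} in integral form, I run the same computation backwards with $u(t,z):=\tilde u(t,e^{-tA}z)$ and the flow $t\mapsto e^{-tA}z$: then $u$ is bounded, continuous, $C^2$ in space with $u(0,\cdot)=0$, the first-order terms cancel again by commutation, and $u$ satisfies the integral form of \eqref{eq:def_ou_operator} with source $f$, the map $f\mapsto\tilde f$ being inverted by the analogous composition. This sets up a bijection between the set of bounded continuous solutions of \eqref{eq:def_ou_operator} and that of \eqref{eq:PDE}. It remains to see that each set is a singleton. For \eqref{eq:PDE} the diffusion matrix $e^{-tA}Be^{-tA^*}$ is continuous in $t$ and non-negative definite, so the maximum principle \eqref{max}, a consequence of Theorem~4.1 in \cite{KP10}, applies; for \eqref{eq:def_ou_operator} one may either note that it is of the form \eqref{eq:Cauchy_problem_gen0} with the linear-growth drift $b(t,z)=Az$, so that \eqref{max} again yields uniqueness, or simply transport uniqueness through the bijection just constructed. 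Combining, $u$ is \emph{the} bounded continuous solution of \eqref{eq:def_ou_operator} if and only if $\tilde u(t,x)=u(t,e^{tA}x)$ is \emph{the} bounded continuous solution of \eqref{eq:PDE}.

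The argument involves no genuine difficulty; the two points that require care are (i) legitimising the time differentiation and the chain rule when $t\mapsto u(t,z)$ is only piecewise $C^1$ --- one works on each of the finitely many intervals of continuity and reassembles the integral identities, using that the flow $t\mapsto e^{\pm tA}x$ is globally smooth so that no new singular times appear, and that the continuity in $t$ of $t\mapsto Du(t,z)$ needed for the chain-rule term is already built into \eqref{int2} --- and (ii) checking that composition with $e^{\pm tA}$ genuinely preserves both the functional class $B_b(0,T;C^\infty_0(\R^N))$ of admissible sources and the structural requirements ``$\cdot(t,\cdot)\in C^2$'' and boundedness built into the notion of solution, which is exactly where one uses that $e^{tA}$ and $e^{-tA}$ are uniformly bounded over the compact time interval $[0,T]$.
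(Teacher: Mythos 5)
There is a genuine gap, and it sits exactly at the step you call a cancellation. Take the reference Cauchy problem to be the paper's Ornstein--Uhlenbeck problem \eqref{eq:OU_initial:intro}, with $L^{\rm ou}$ as in \eqref{DEF_OU_OP_PROXY}, i.e.\ drift $+\langle Az,D\cdot\rangle$. Your chain rule $\partial_t\tilde u(t,x)=(\partial_t u)(t,e^{tA}x)+\langle Du(t,e^{tA}x),Ae^{tA}x\rangle$ is correct, but when you substitute $(\partial_t u)(t,e^{tA}x)={\rm Tr}\big(BD^2u(t,e^{tA}x)\big)+\langle Ae^{tA}x,Du(t,e^{tA}x)\rangle+f(t,e^{tA}x)$ the two first-order terms carry the \emph{same} sign (the flow $t\mapsto e^{tA}x$ moves with the drift, not against it), so they do not cancel: using $Du(t,e^{tA}x)=e^{-tA^*}D\tilde u(t,x)$ and $e^{-tA}Ae^{tA}=A$ one finds
\begin{equation*}
\partial_t\tilde u(t,x)\,=\,{\rm Tr}\big(e^{-tA}Be^{-tA^*}D^2\tilde u(t,x)\big)+2\,\langle Ax,D\tilde u(t,x)\rangle+f(t,e^{tA}x),
\end{equation*}
which is not the driftless equation claimed in the statement. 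Symmetrically, in your converse step the function $u(t,z):=\tilde u(t,e^{-tA}z)$ solves the equation with drift $-\langle Az,Du\rangle$ rather than $+\langle Az,Du\rangle$. The cancellation you invoke occurs for the opposite composition: with the paper's operator, the drift-removing substitution is $v(t,x):=u(t,e^{-tA}x)$ (equivalently $u(t,z)=v(t,e^{tA}z)$), and then $v$ solves the driftless problem with diffusion matrix $e^{tA}Be^{tA^*}$ and source $f(t,e^{-tA}x)$; this is exactly the computation \eqref{COMP_0H_VAR} leading to \eqref{ma}. So either $\tilde u$ and $\tilde f$ must be defined through $e^{-tA}$ (and the diffusion matrix becomes $e^{tA}Be^{tA^*}$), or the reference problem must carry the drift $-\langle Az,D\cdot\rangle$; read against \eqref{eq:OU_initial:intro}, the identity you assert is false as written, and both implications of your proof inherit this sign error.

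Apart from this, the architecture of your argument is the one the paper uses (Sections 1.1 and 2.2, together with the uniqueness discussion of Section 1.2): transport the integral formulation along the linear flow on each interval where $t\mapsto u(t,z)$ is $C^1$, check that composition with $e^{\pm tA}$ preserves $B_b\left(0,T;C^\infty_0(\R^N)\right)$, boundedness and spatial $C^2$ regularity, and obtain uniqueness on both sides from the maximum principle \eqref{max} (Theorem 4.1 in \cite{KP10}); your second-order identity $D^2u(t,e^{tA}x)=e^{-tA^*}D^2\tilde u(t,x)\,e^{-tA}$ is also correct. Once the exponents are swapped as indicated, the proof goes through and coincides with the paper's argument.
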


The above result links the Cauchy Problem \eqref{eq:def_ou_operator}
involving Ornstein-Uhlenbeck operators with the diffusion one \eqref{eq:Cauchy_problem_gen}. In particular, it allows us to exploit Theorem \ref{thm:general} for the  Ornstein-Uhlenbeck operators.

From the assumed estimate on $u$ in \eqref{A_PRIO_EST}, we derive that for $\tilde u $ as in Proposition \ref{prop:link_ou_diffusive}:
\begin{align}\label{SHIFTED_A_PRIORI}
[u]_1:=[u(t,x)]_1=[\tilde u(t,e^{-t A}x)]_1=:\leftB \tilde u \rightB_1\le C[f]_2=[\tilde f(t,e^{-t A}x)]_2=:\leftB \tilde f\rightB_2,
\end{align}
with $\tilde{f}(t,x)=f(t,e^{tA}x) $ again as in Proposition \ref{prop:link_ou_diffusive}. The idea is now to apply Theorem \ref{thm:general}. To this end we first need to prove that $\leftB\cdot\rightB_1, \leftB\cdot\rightB_2 $ defined in \eqref{SHIFTED_A_PRIORI} are actually suitable semi-norms in the sense of Definition \ref{DEF_SUITABLE_NORM}. This property readily follows from the previous definition and the fact that $[\cdot]_1, [\cdot]_2 $ themselves are suitable seminorms.

It is also clear that, for $S$ as in the theorem, there exists a unique solution integral $u_S$ to \eqref{eq:OU_PERT} which is continuous and twice continuously differentiable in space. Similarly, we can associate to $ u_S$ the mapping $\tilde u_S(t,x)=u_S(t,e^{-At}x) $ which solves:
\begin{equation}
\label{eq:PDE_PERT}
    \begin{cases}
    \partial_t \tilde{u}_S(t,x) \, = \, \text{tr}\left(e^{-tA}(B+S(t))e^{-tA^\ast}D^2_x\tilde{u}_S(t,x)\right) +\tilde{f}(t,x), &\mbox{ on }[0,T]\times \R^N;\\
    \tilde{u}_S(0,x) \, = \, 0, &\mbox{ on }\R^N;
    \end{cases}.
\end{equation}
In short, with the notations of Section \ref{SEC_DRIFT_LESS}, we can write  $\tilde u=PDE(Q(t),\tilde f) $ with $Q(t)=e^{-tA}Be^{-tA^\ast} $ and $\tilde u_S=PDE(Q(t)+Q(t)',\tilde f) $ with $Q(t)'=e^{-tA}(B)e^{-tA^\ast} $. It is clear that $Q(t),Q(t)' $ are non-negative. From \eqref{SHIFTED_A_PRIORI}
and Theorem \ref{thm:general} we thus obtain:
\begin{align}\label{SHIFTED_A_PRIORI_FINAL}
\leftB \tilde u_S \rightB_1\le C\leftB \tilde f\rightB_2,
\end{align}
with the same previous constant $C$. From the definition of $(\leftB\cdot\rightB_i)_{i\in \{1,2\}} $ in \eqref{SHIFTED_A_PRIORI} we eventually derive:
$$[u_S]_1\le C[f]_1, $$
which concludes the proof.

\subsection{Concrete seminorms considered}\label{CONCRETE_SN}
In the final section of our article, we briefly present two natural examples of \emph{suitable} seminorms for which we can apply Theorem \ref{thm:OU_general} when the matrices $A,B$ satisfy the Kalman condition \textbf{[K]}.\newline
In order to introduce them, we firstly need to talk about the anisotropic nature of degenerate Ornstein-Uhlenbeck operators satisfying \textbf{[K]}. Intuitively, the appearance of this kind of phenomena is due essentially to the particular structure of the matrix $A$
that allows the smoothing effect of the second order operator $\text{tr}(BD^2_x)$, acting only on the first "component" given by $B_0$, to propagate into the system with lower and lower intensity. We recall indeed from \cite{Lanconelli:Polidoro94} that for degenerate Ornstein-Uhlenbeck operators, the Kalman condition is equivalent to the fact that $A,B$ must have the form:
\begin{equation}\label{eq:Lancon_Pol}
B \, = \,
    \begin{pmatrix}
        B_0 & 0_{d_1,N-d_1}   \\
        0 _{N-d_1,d_1}&   0_{N-d_1,N-d_1}  \\
    \end{pmatrix}
\,\, \text{ and } \,\, A \, = \,
    \begin{pmatrix}
        \ast   & \ast  & \dots  & \dots  & \ast   \\
         A_2   & \ast  & \ddots & \ddots  & \vdots   \\
        0      & A_3   & \ast  & \ddots & \vdots \\
        \vdots &\ddots & \ddots& \ddots & \ast \\
        0      & \dots & 0     & A_n    & \ast
    \end{pmatrix}
\end{equation}
where $B_0$ is a non-degenerate matrix in $\R^{d_1}\otimes \R^{d_1}$ and $A_h$ are matrices in $\R^{d_h}\otimes \R^{d_{h-1}}$ with
$\text{rank}(A_h)=d_h$ for any $h$ in $\llbracket 2,n\rrbracket$. Moreover, $d_1\ge d_2\ge \dots\ge d_n\ge 1$ and $d_1+\dots+d_n=N$.

\paragraph{Anisotropic Geometry of Dynamics.} \textcolor{black}{Da S. a E.: forse da accorciare o sopprimere, la parte cinetica intendo.}
In order to illustrate how degenerate Ornstein-Uhlenbeck operators usually behave, we focus for the moment on the prototypical example we have already considered in the introduction, the Kolmogorov operator ${\Delta}_{\text{K}}$.\newline
Fixed $N=2d$ and $n=2$ we recall that $\Delta_{\text{K}}$ can be represented for any sufficiently regular function $\phi\colon \R^N\to \R$ as
\[\Delta_{\text{K}}\phi(x) \, = \, \Delta_{x_1}\phi(x)+ x_1\cdot D_{x_2}\phi(x) \quad \text{ on }\R^{2d},\]
where $(x_1,x_2)\in\R^{2d}$ and $\Delta_{x_1}$ is the standard Laplacian with respect to $x_1$. In our framework, it is associated with the matrices
\[A_{\text{K}} \, := \,
    \begin{pmatrix}
               0 & 0 \\
               I_{d\times d} & 0
    \end{pmatrix} \,\, \text{ and } \,\, B \, := \,
    \begin{pmatrix}
                I_{d\times d} & 0 \\
                 0 & 0
    \end{pmatrix}.
             \]
We then search for a dilation
\[\delta_\lambda\colon [0,\infty)\times \R^{2d} \to [0,\infty)\times \R^{2d}\]
which is invariant for the considered dynamics, i.e.\ a dilation that transforms solutions of the equation
\[\partial_tu(t,x)-\Delta_{\text{K}} u(t,x) \, = \, 0 \quad \text{ on }(0,\infty)\times\R^{2d}\]
into other solutions of the same equation. The idea of a dilation operator $\delta_\lambda$ that summarizes the multi-scaled behaviour of the dynamics was firstly introduced by Lanconelli and Polidoro in \cite{Lanconelli:Polidoro94}.  Since then, it has
become a "standard" tool in the analysis of degenerate equations (see e.g. \cite{Lunardi97}, \cite{Huang:Menozzi:Priola19} or  \cite{Marino20}, \cite{Marino21} in the fractional setting). \newline
Due to the particular sub-diagonal structure of $A_{\text{K}}$ and the natural scaling of $\Delta_{x_1}$ of order $2$, it makes sense to consider for any fixed $\lambda>0$, the
following
\[ \delta_\lambda(t,x_1,x_2)\, :=\,  (\lambda^2 t,\lambda x_1,\lambda^3 x_2).\]
It then holds that
\[\bigl(\partial_t -\Delta_{\text{K}}\bigr) u = 0 \, \Longrightarrow \bigl(\partial_t -\Delta_{\text{K}} \bigr)(u \circ
\delta_\lambda) = 0.\]
The previous reasoning suggests to introduce a parabolic distance $\mathbf{d}_P$ that is homogenous with respect to the dilation $\delta_\lambda$,
so that:
\[\mathbf{d}_P\bigl(\delta_\lambda(t,x);\delta_\lambda(s,x')\bigr) = \lambda \mathbf{d}_P\bigl((t,x);(s,x')\bigr).\]
Following the notations in \cite{Huang:Menozzi:Priola19}, we then introduce the distance $\mathbf{d}_P$ on $[0,\infty)\times \R^{2d}$  given by
\begin{equation}\label{Definition_distance_d_P}
\mathbf{d}_P\bigl((t,x),(s,x')\bigr)  \, := \, \vert s-t\vert^\frac{1}{2}+\mathbf{d}(x,x') \, := \,
\vert s-t\vert^\frac{1}{2} +|x_1-x_1'|+|x_2-x_2'|^{\frac{1}{3}}.
\end{equation}
As it can be seen, $\mathbf{d}_P$ is an extension of the standard parabolic distance adapted to respect the multi-scale nature
of the underlying operator. Indeed, the exponents appearing in \eqref{Definition_distance_d_P} are those which make each space component homogeneous to
the characteristic time scale $t^{1/2}$.\newline
From a more
probabilistic point of view, the exponents in Equation \eqref{Definition_distance_d_P} can also be related to the characteristic
time scales of the iterated integrals of a Brownian Motion. It can be easily seen from the example, noticing that the operator $\Delta_{\text{K}}$ corresponds to the generator of the Brownian Motion and its time integral.

Going back to the general framework with $A,B$ as in \eqref{eq:Lancon_Pol}, we can now construct the suitable distance $\mathbf{d}$ associated with the Cauchy Problem \eqref{eq:OU_initial} on $\R^N$.\newline
Recalling that $n$ denotes the smallest integer such that the Kalman rank condition [\textbf{K}] holds, we follow
\cite{Lunardi97} decomposing the space $\R^N$ with respect to the family of linear operators $B, AB,\dots,
A^{n-1}B$. \newline
We start defining the family $\{V_h\colon h\in \llbracket 1,n \rrbracket\}$ of subspaces of $\R^N$ through
\[
V_h \,  := \, \begin{cases}
            \text{Im} (B), & \mbox{if } h=1, \\
            \bigoplus_{k=1}^{h}\text{Im}(A^{k-1}B), & \mbox{otherwise}.
        \end{cases}\]
It is easy to notice that $V_h\neq V_k$ if $k\neq h$ and $V_1\subset V_2\subset\dots V_n=\R^N$. We can then construct iteratively the family
$\{E_h \colon h\in \llbracket 1,n \rrbracket\}$ of orthogonal projections from $\R^N$ as
\[E_h \,  := \,
        \begin{cases}
            \text{projection on } V_1, & \mbox{if } h=1; \\
            \text{projection on }(V_{h-1})^\perp \cap V_h, & \mbox{otherwise}.
        \end{cases}
\]
With a small abuse of notation, we will identify the
projection operators $E_h$ with the corresponding matrices
in $\R^N\otimes \R^N$. Actually, we have that $\dim E_h(\R^N)=d_h$ for any $h\ge 1$, where the integer $d_h$ already appeared in equation \eqref{eq:Lancon_Pol}.\newline
We can then define the distance $\mathbf{d}$ through the decomposition $\R^N=\bigoplus_{h=1}^nE_h(\R^N)$ as
\[\mathbf{d}(x,x') \,:= \, \sum_{h=1}^{n}\vert E_h(x-x')\vert^{\frac{1}{1+2(h-1)}}.\]
We highlight however that, technically speaking, $\mathbf{d}$ does not induce a norm on $[0,T]\times \R^N$ in the usual sense since it lacks of
linear homogeneity. \newline
The anisotropic distance $\mathbf{d}$ can be understood direction-wise: we firstly fix a "direction" $h$ in $\llbracket
1,n\rrbracket$ and then calculate the standard Euclidean distance on the associated subspace $E_h(\R^N)$, but scaled according to the
dilation of the system in that direction. We conclude summing the contributions associated with each component.\newline
Highlighted the particular anisotropic nature of the operator through the distance $\bm{d}$, we can now extend two standard functional spaces, the H\"older and the Sobolev ones, in order to reflect the intrinsic multi-scale behavior given by the dilation operator $\delta_\lambda$.
\textcolor{black}{Da S. a E.: veramente forse dovremmo parlare della parte infatti omogenea che d\`a gli esponenti.}

\paragraph{Anisotropic H\"older spaces.}
Fixed some $k$ in $\N_0:=\N\cup\{0\}$ and $\beta$ in $(0,1]$, we follow Krylov \cite{book:Krylov96_Holder} denoting the Zygmund-H\"older semi-norm for a function $\phi\colon \R^N\to \R$ as
\[[\phi]_{C^{k+\beta}} \, := \,
\begin{cases}
    \sup_{\vert \vartheta \vert= k}\sup_{x\neq y}\frac{\vert D^\vartheta\phi(x)-D^\vartheta\phi(y)\vert}{\vert x-y\vert^\beta} , & \mbox{if
    }\beta \neq 1; \\
    \sup_{\vert \vartheta \vert= k}\sup_{x\neq y}\frac{\bigl{\vert}D^\vartheta\phi(x)+D^\vartheta\phi(y)-2D^\vartheta\phi(\frac{x+y}{2})
    \bigr{\vert}}{\vert x-y \vert}, & \mbox{if } \beta =1.
                             \end{cases}\]
Consequently, The Zygmund-H\"older space $C^{k+\beta}_b(\R^N)$ is the family of functions $\phi\colon \R^N
\to\R$ such that $\phi$ and its derivatives up to order $k$ are continuous and the norm
\[\Vert \phi \Vert_{C^{k+\beta}_b} \,:=\, \sum_{i=1}^{k}\sup_{\vert\vartheta\vert = i}\Vert D^\vartheta\phi
\Vert_{\infty}+[\phi]_{C^{k+\beta}_b} \,\text{ is finite.}\]

We can define now the anisotropic Zygmund-H\"older spaces associated with the distance $\mathbf{d}$. Fixed $\gamma>0$, the space $C^{\gamma}_{b,d}(\R^N)$ is
the family of functions $\phi\colon \R^N\to \R$ such that for any $h$ in $\llbracket 1,n\rrbracket$ and any $x_0$ in $\R^N$, the function
\[z\in  E_h(\R^N)\,  \to \, \phi(x_0+z) \in \R \,\text{ belongs to }C^{\gamma/(1+2(h-1))}_b\left(E_h(\R^N)\right),\]
with a norm bounded by a constant independent from $x_0$. It is endowed with the norm
\begin{equation}\label{eq:def_anistotropic_norm}
\Vert\phi\Vert_{C^{\gamma}_{b,d}} \,:=\,\sum_{h=1}^{n}\sup_{x_0\in \R^N}\Vert\phi(x_0+\cdot)\Vert_{C^{\gamma/(1+2(h-1))}_b}.
\end{equation}
We highlight that it is possible to recover the expected joint regularity for the partial derivatives, when
they exist, as in the standard H\"older spaces. In such a case, they actually turn out to be H\"older continuous with respect to the
distance $\mathbf{d}$ with order one less than the function (See Lemma $2.1$ in \cite{Lunardi97} for more details).
\newline
Exploiting that for the "standard" Zygmund-H\"older norm $\Vert\cdot\Vert_{L^\infty((0,T),C^{\gamma}_{b})} $ is clearly suitable (in the sense of Definition \ref{DEF_SUITABLE_NORM}) and reasoning components by components, it is not difficult to conclude that also the anisotropic norm $\Vert\cdot\Vert_{L^\infty((0,T),C^{\gamma}_{b,d})}$ is suitable as well.\newline
Moreover, Lunardi in \cite{Lunardi97} showed that the following anisotropic Schauder estimates holds for the solution $u$ of Cauchy Problem \eqref{eq:OU_initial}:
\begin{equation}\label{SCHAU_OU}
\Vert u \Vert_{L^\infty((0,T),C^{2+\beta}_{b,d})} \, \le \, C \Vert f \Vert_{L^\infty((0,T),C^{\beta}_{b,d})},
\end{equation}
for some constant $C$ independent from $f$.
Thus, all the assumptions of Theorem \ref{thm:OU_general} are satisfied with $[\cdot]_1 
=\Vert\cdot\Vert_{L^\infty( (0,T),C^{2+\beta}_{b,d})}$ and $[ \cdot ]_2=\Vert\cdot\Vert_{L^\infty((0,T),C^{\beta}_{b,d})}$.

\paragraph{Anisotropic Sobolev spaces.}
Given $\alpha$ in $(0,1)$ and $h$ in $\llbracket 1, n \rrbracket$, we want to introduce the $\alpha$-fractional Laplacian $\Delta^\alpha_{x_h}$ along the "component" $x_h$.
To do it, we follow \cite{Huang:Menozzi:Priola19} by considering the orthogonal projection $p_h\colon \R^N\to \R^{d_h}$ such that $p_h(x)=x_h$ and denoting its adjoint by $B_h=p_h\colon \R^{d_h}\to \R^N$. Notice that $B=B_1B^\ast_1$ in a matricial form.\newline
We can now define the $\alpha$-fractional Laplacian $\Delta^\alpha_{x_h}$ as:
\[\Delta^{\alpha}_{x_h}\phi(x) \, := \, \text{p.v.}\int_{\R^{d_i}}\left[\phi(x+B_hz)-\phi(x)\right] \frac{dz}{|z|^{d_h+\alpha}}, \quad x \in \R^N,\]
for any sufficiently regular function $\phi \colon \R^N\to \R$.\newline
Fixed $p$ in $(1,+\infty)$, we recall that we have denoted by $L^p([0,T]\times \R^N)$ the standard $L^p$-space with respect to the Lebesgue measure.\newline
We can now define the anisotropic Sobolev space associated with the distance $\bm{d}$. For notational simplicity, let us denote \[\alpha_h \,:=\, \frac{1}{1+\alpha(h-1)}.\]
The space $\dot{W}^{2,p}_d([0,T]\times \R^N)$ is composed by all the functions $\varphi\colon [0,T]\times\R^N\to \R$ in $L^p([0,T]\times \R^N)$ such that for any $h$ in $\llbracket 1, n \rrbracket$,
\[\Delta^{\alpha_h}_{x_h}\varphi(t,x) \, := \,  \Delta^{\alpha_h}_{x_h}\varphi(t,\cdot)(x) \text{ belongs to } L^p([0,T]\times \R^N).\]
It is endowed with the natural norm
\[\Vert \varphi \Vert_{\dot{W}^{2,p}} \, = \, \sum_{h=1}^n\Vert \Delta^{\alpha_h}_{x_h}\varphi \Vert_{L^p}.\]
Similarly to the first example, we can reason componentwise in order to show that the anisotropic Sobolev norm $\Vert \cdot \Vert_{\dot{W}^{2,p}_d}$ is indeed a suitable seminorm in our definition.\newline
In \cite{Huang:Menozzi:Priola19}, see also \cite{chen:zhan:19}  and \cite{menozziSPA} where time inhomogeneous coefficients are considered as well, it has been proven that when the strictly upper diagonal elements of $A$ in \eqref{eq:Lancon_Pol} are equal to zero then the following Sobolev estimates hold:
\begin{equation}\label{SOB_EST_GLOB}
\Vert u \Vert_{\dot{W}^{2,p}_d} \, \le \, C_p\Vert f \Vert_{L^p},
\end{equation}
where again $u$ is the unique bounded solution of Cauchy problem \eqref{eq:OU_initial}. In particular we get also the maximal smoothing effects w.r.t. the degenerate directions.
The specific structure assumed on $A$ is actually due
to the fact that for such matrices there is an underlying homogeneous space structure which makes easier to establish maximal regularity estimates (see e.g. \cite{coif:weis:71} in this general setting). Anyhow, in this case the assumptions of Theorem \ref{thm:OU_general} are satisfied with $[\cdot]_1
=\Vert\cdot\Vert_{\dot{W}^{2,p}_d}$ and $
[\cdot]_2=\Vert\cdot\Vert_{L^p}$.

For a general $A$ with the form in \eqref{eq:Lancon_Pol} such that $A$, $B$ satisfy \textbf{[K]}, we can refer to \cite{Bramanti:Cupini:Lanconelli:Priola10} (\textcolor{black}{Argomento da precisare per passare alla $f$, un taglio sfruttando la densit\`a esplicita e le sue propriet\`a}) to assert that the unique bounded solution of Cauchy problem \eqref{eq:OU_initial} actually satisfies
\begin{equation}\label{LP_OU_NON_DEG}
 \Vert D_{x_1}^2u \Vert_{L^p} \, \le \, C_p\Vert f \Vert_{L^p}.
\end{equation}
 We believe that the approach in \cite{Bramanti:Cupini:Lanconelli:Priola10} could extend to show that for \eqref{SOB_EST_GLOB} still hold in this general setting, but such estimates have not been, up to our best knowledge, proven yet.  In this case, to apply Theorem \ref{thm:OU_general} we consider $[\cdot]_1
=\Vert D_{x_1}^2\cdot\Vert_{L^{p}}$ and $
[\cdot]_2=\Vert\cdot\Vert_{L^p}$.

From the above information, namely \eqref{SCHAU_OU}, \eqref{SOB_EST_GLOB}, \eqref{LP_OU_NON_DEG}, and Theorem \ref{thm:OU_general} we obtain the following final result:

\begin{theorem}\label{CONCRETE_PERTURBATION_ESTIMATES} Let $A$, $B$ be two matrices of the form \eqref{eq:Lancon_Pol} such that the Kalman condition \textbf{[K]} holds.  Then, for $f$ in $B_b\left(0,T;C^\infty_0(\R^N)\right)$, the  unique bounded, continuous solution $u\colon [0,T]\times \R^N\rightarrow \R$ in integral form of the associated Cauchy Problem \eqref{eq:OU_initial} which is also smooth in space is such that:
\begin{trivlist}
\item[-] For all $\beta\in (0,1) $, there exists $C_\beta:=C(T,N,\beta)$ such that:
$$ \sup_{t\in[0,T]}\Vert u(t,\cdot)\Vert_{C^{2+\beta}_{b,d}} \, \le \, C_\beta \sup_{t \in (0,T)}\Vert f(t,\cdot) \Vert_{C^\beta_{b,d}}.$$
\item[-] For $p\in (1,+\infty) $, there exists $C_p:=C(T,N,p)$ such that:
$$\Vert D_{x_1}^2u \Vert_{L^p} \, \le \, C_p\Vert f \Vert_{L^p}.$$
If additionally $A_{ij}=0$ for $ j>i$ then
$$\Vert u \Vert_{\dot{W}^{2,p}_d} \, \le \,  C_p \Vert f \Vert_{L^p}.$$
\end{trivlist}
Let now $S:[0,T]\mapsto \R^N\otimes \R^N$ be a continuous matrix valued mapping s.t. for all $t\in [0,T] $, $S(t) $ is symmetric and non-negative. Then, the unique
bounded, continuous solution $u_S\colon [0,T]\times \R^N\rightarrow \R$ in integral form of the associated Cauchy Problem \eqref{eq:OU_PERT} which is also smooth in space also satisfies that:
\begin{trivlist}
\item[-] For all $\beta\in (0,1) $,
$$ \sup_{t\in[0,T]}\Vert u_S(t,\cdot)\Vert_{C^{2+\beta}_{b,d}} \, \le \, C_\beta \sup_{t \in (0,T)}\Vert f(t,\cdot) \Vert_{C^\beta_{b,d}}.$$
\item[-] For $p\in (1,+\infty) $,
$$\Vert D_{x_1}^2u_S \Vert_{L^p} \, \le \, C_p\Vert f \Vert_{L^p}.$$
If additionally $A_{ij}=0$ for $ j>i$ then
$$\Vert u_S \Vert_{\dot{W}^{2,p}_d} \, \le \,  C_p \Vert f \Vert_{L^p},$$
\end{trivlist}
for the \textbf{same previous constants} $C_\beta,C_p$.

\end{theorem}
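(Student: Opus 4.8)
The plan is to obtain Theorem~\ref{CONCRETE_PERTURBATION_ESTIMATES} as an application of the abstract stability scheme of the paper, run once for each of the three concrete seminorms. For a fixed pair $([\cdot]_1,[\cdot]_2)$ such that the unperturbed problem for $L^{\text{ou}}$ satisfies $[u]_1\le C[f]_2$ and such that $[\cdot]_1,[\cdot]_2$ are \emph{suitable} in the sense of Definition~\ref{DEF_SUITABLE_NORM}, the general Ornstein--Uhlenbeck stability theorem (the $L^{\text{ou}}$-counterpart of Theorem~\ref{uno}, obtained by combining Proposition~\ref{prop:link_ou_diffusive} with the general-seminorm version of Theorem~\ref{uno}) yields $[u_S]_1\le C[f]_2$ for the perturbed problem \eqref{eq:OU_PERT0} with the \emph{same} constant $C$. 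I would therefore apply this with $[\cdot]_1$ successively equal to $\Vert\cdot\Vert_{L^\infty((0,T),C^{2+\beta}_{b,d})}$, to $\Vert D^2_{x_1}\cdot\Vert_{L^p}$ (the middle case being essentially Theorem~\ref{d44} itself), and --- under the extra assumption that the strictly upper triangular part of $A$ vanishes --- to $\Vert\cdot\Vert_{\dot W^{2,p}_d}$, each paired with the natural seminorm on the source.

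First I would record that the three required baseline estimates for $L^{\text{ou}}$ are available. The anisotropic Schauder bound is Lunardi's \cite{Lunardi97}; the bound $\Vert D^2_{x_1}u\Vert_{L^p}\le C_p\Vert f\Vert_{L^p}$ is Theorem~3 of \cite{Bramanti:Cupini:Lanconelli:Priola10}, whose validity for sources $f\in B_b(0,T;C_0^\infty(\R^N))$ is justified exactly as in Section~\ref{KH_OU} (kernel representation \eqref{PROB_REP_OU}, derivative bound \eqref{EST_DENS}, principal-value identity \eqref{REP_SING}, then the harmonic-analysis part of \cite{Bramanti:Cupini:Lanconelli:Priola10}); and the global anisotropic Sobolev bound follows from \cite{Huang:Menozzi:Priola19}, \cite{chen:zhan:19} when the strictly upper triangular block of $A$ is zero, since then the intrinsic dilation makes $\R^N$ a homogeneous space. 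The only extra remark needed is that these references are usually stated for $f\in C_0^\infty((0,T)\times\R^N)$, whereas here $f$ is only piecewise continuous in time; this is immaterial because the probabilistic representation and all the kernel estimates of Section~\ref{KH_OU} persist for such $f$, and the seminorms at stake control only the spatial behaviour, uniformly in $t$.

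Next I would verify that each of the three seminorms is suitable: translation invariance in space, compatibility with averaging against probability measures (Jensen plus Fubini, as in the computation leading to \eqref{d3}), and stability under replacing a second finite-difference quotient by a second derivative as $\epsilon\to0$. For the two \emph{anisotropic} norms this is checked block by block along $\R^N=\bigoplus_h E_h(\R^N)$, where on each block one is reduced to the ordinary Zygmund--H\"older, $L^p$ or fractional-Sobolev seminorm, for which suitability is classical. Granting this, the abstract theorem applies through the driftless reduction of Section~\ref{SEZ_STRAT}: with $v(t,z)=u(t,e^{-tA}z)$ the operator $L^{\text{ou}}$ becomes the driftless $\mathrm{Tr}(e^{-tA}Be^{-tA^\ast}D^2\cdot)$ and the perturbation $\mathrm{Tr}(S(t)D^2\cdot)$ becomes $\mathrm{Tr}(Q'(t)D^2\cdot)$ with $Q'(t)=e^{-tA}S(t)e^{-tA^\ast}$, again symmetric non-negative definite; Theorem~\ref{uno} (in its general-seminorm form) gives the estimate for the perturbed driftless problem with the same constant; and transforming back via $\tilde u_S(t,z)=w(t,e^{tA}z)$, together with $D^2 w(t,\cdot)=e^{-tA^\ast}D^2\tilde u_S(t,e^{-tA}\cdot)e^{-tA}$, yields the claimed bounds for $u_S$ once the transformed seminorm is re-expressed in the original one --- the step that really uses suitability under the linear change of variables $z\mapsto e^{\pm tA}z$.

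The main obstacle I expect is precisely that last verification for the \emph{anisotropic} seminorms: confirming that $\Vert\cdot\Vert_{C^{2+\beta}_{b,d}}$ and $\Vert\cdot\Vert_{\dot W^{2,p}_d}$ behave correctly both under the finite-difference-to-derivative passage and under the maps $e^{\pm tA}$, given that $e^{tA}$ does not preserve the individual subspaces $V_h$ but only --- up to lower-order terms --- the flag $V_1\subset\cdots\subset V_n$. A secondary but genuine point is careful bookkeeping of which conclusion is available in which regime: for a general $A$ compatible with \textbf{[K]} only the non-degenerate-direction estimate $\Vert D^2_{x_1}u_S\Vert_{L^p}$ is obtained, whereas the full $\dot W^{2,p}_d$-statement requires the additional triangular hypothesis $A_{ij}=0$ for $j>i$, which is exactly why the theorem isolates that case.
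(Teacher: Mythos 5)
Your proposal follows essentially the same route as the paper: the theorem is obtained there, too, by combining the baseline estimates of \cite{Lunardi97}, \cite{Bramanti:Cupini:Lanconelli:Priola10} and \cite{Huang:Menozzi:Priola19} with the constant-preserving Poisson-perturbation stability argument, applied after the driftless reduction and then transformed back, with the suitability/invariance of the anisotropic seminorms checked componentwise (for the Schauder norm via translation invariance and the identity $\epsilon X_t=e^{tA}\bigl(\epsilon e^{-tA}X_t\bigr)$, for the Sobolev seminorm via the change of variables with $\det e^{tA}=1$ under the triangular hypothesis on $A$) --- precisely the verifications you single out as the main obstacle. The only slip is notational: with $v(t,z)=u(t,e^{-tA}z)$ the driftless matrix is $Q(t)=e^{tA}Be^{tA^{*}}$ and the perturbation becomes $e^{tA}S(t)e^{tA^{*}}$, not $e^{-tA}Be^{-tA^{*}}$, which corresponds to the opposite convention $v(t,z)=u(t,e^{tA}z)$.
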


\section{On some related applications}
In order to illustrate one possible  application of the above estimates we will consider the following stochastic differential equation:
\begin{equation}\label{SDE_ATTRITO}
dX_t=\big(AX_t +F(t,X_t)\big) dt+\left ( \begin{array}{c}\sigma_1(t,X_t) dW_t^1\\
\sigma_2(t) dW_t^2\end{array}\right),
\end{equation}
where $(W_t^1,W_t^2) $ are $d$-dimensional Brownian motions defined on some probability space  $(\Omega,\mathcal F,\mathbb P) $, possibly dependent.  $X_t=(X_t^1,X_t^2)\in \R^{2d} $. We assume that:

\begin{trivlist}
\item[-]  The  matrices
$A$ and $B=\left ( \begin{array}{cc} I_{d,d}&  0_{d,d}\\ 0_{d,d}& 0_{d,d}\end{array}\right) \in \R^{2d}\otimes \R^{2d} $ satisfy the Kalman condition \textbf{[K]}.

\item[-\textbf{[F]}] The non linear drift $F=(F_1,F_2)$, is measurable in time and s.t.
\begin{itemize}
\item[-]$\exists K>0,\ \forall t\in [0,T],\ |b(t,0)|\le K $.
\item[-] $x=(x_1,x_2)\mapsto F_1(t,x) $  $\beta^1 $-H\"older continuous uniformly in $t\in [0,T]$.
\item[-] $x=(x_1,x_2)\mapsto F_2(t,x)=F_2(t,x_2)$ is uniformly $\beta^2>1/3$ H\"older continuous uniformly in $t\in [0,T]$.
\end{itemize}
\item[-\textbf{[UE]}]The diffusion coefficient $a_1=\sigma_1\sigma_1^*$ is uniformly elliptic and bounded, i.e.
there exists $\kappa\ge 1$ s.t. for all $t\in [0,T], x\in \R^{2d},\ \xi\in \R^d $:
\begin{equation}
\label{UE}
\kappa^{-1}|\xi|^2 \le \langle a_1(t,x)\xi,\xi\rangle \le \kappa |\xi|^2.
\end{equation}
\item[-\textbf{[L]}] Local condition for the diffusion. There exists  a measurable function $\varsigma:[0,T]\rightarrow {\mathcal S}_d $ (symmetric matrices of dimension $d$)
satisfying \textbf{[UE]} and such that
\begin{eqnarray}
\label{LOC_COND}
\varepsilon_{a_1}&:=&\sup_{0\le t \le T}\sup_{x\in \R^{2d} } |a_1(t,x)-\varsigma(t)|,
\end{eqnarray}
is \textit{small}. In particular, we do not assume any \textit{a priori} continuity of $a_1$.
\item[-\textbf{[C]}] The mapping $t\mapsto \sigma_2(t) $ is continuous.
\end{trivlist}
Importantly, we do not assume any non-degeneracy condition on the coefficient $\sigma_2$.

We have the following result.
\begin{theorem} \label{THM_SDE_ATTRITO}
Assume \textbf{[H]}, \textbf{[F]}, \textbf{[UE]}, \textbf{[L]}, \textbf{[C]} are in force. Then, the martingale problem associated with $(L_t)_{t\ge 0} $ where for $\varphi\in C_0^\infty(\R^{2d}) $, $(t,x)\in [0,T]\times \R^{2d} $,
\begin{align}
 L_t\varphi(x)=& \frac 12 {\rm Tr}\Big( a_1(t,x) D_{x_1}^2 \varphi(x)\Big)+\frac 12 {\rm Tr}\Big( a_2(t) D_{x_2}^2 \varphi(x)\Big)\notag\\
 &+\langle (Ax+F(t,x)),D\varphi(x)\rangle,\ a_2(t):=\sigma_2\sigma_2^*(t),
 \end{align}
is well posed. In particular, there exists a unique weak solution to equation \eqref{SDE_ATTRITO}. In particular, denoting by $\mathbb P$ the only solution of the martingale problem on $C([0,T],\R^{2d}) $, and denoting by $(X_t)_{t\ge 0}$ the associated canonical process, it holds that for given $p,q$ s.t. $4d/p+2/q<2 $, there exists $C_{p,q}\ge 1$ s.t.  for all $f\in L^q((0,T),L^p(\R^{2d}) $:
$$\mathbb E^{{\mathbb P}_{t,x}}[\int_t^T f(s,X_s)ds]\le C_{p,q}\|f\|_{L^q((0,T),L^p(\R^{2d}))} .$$
\end{theorem}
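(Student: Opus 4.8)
The plan is to run the classical Stroock--Varadhan localisation--and--perturbation scheme for martingale problems, the decisive input being that the stability results of the present paper (Theorems~\ref{uno} and~\ref{d44}, together with their Schauder and degenerate Sobolev analogues) let the \emph{degenerate} component $a_2(t)$ of the diffusion --- which is not small and therefore cannot be treated perturbatively --- be \emph{absorbed into the unperturbed proxy operator without affecting the constants}. Concretely, by \textbf{[UE]} and \textbf{[L]} the reference matrix $\varsigma$ dominates $\kappa^{-1}I_d$, so fix $B_0:=\tfrac1{2\kappa}I_d$, let $B=\mathrm{diag}(B_0,0)$ (still Kalman--admissible with $A$, since scaling $B_0$ does not affect \textbf{[K]}), and set
\[
S(t):=\begin{pmatrix}\tfrac12\varsigma(t)-B_0 & 0\\ 0 & \tfrac12 a_2(t)\end{pmatrix},\qquad t\in[0,T],
\]
which is symmetric and non-negative definite by \textbf{[UE]}, \textbf{[L]}, \textbf{[C]} (continuity of $t\mapsto S(t)$ is recovered after a harmless mollification of $\varsigma$, to which all constants below are insensitive). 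Then $L_t=L_t^{\text{ou},S}+\mathcal R_t$, where $L_t^{\text{ou},S}$ is the operator \eqref{ciao} built from this $B$ and $S(t)$, and $\mathcal R_t\phi(x)=\tfrac12\mathrm{Tr}\big((a_1(t,x)-\varsigma(t))D_{x_1}^2\phi(x)\big)+\langle F(t,x),D\phi(x)\rangle$. By Theorems~\ref{uno}--\ref{d44} and the corresponding Sobolev/Schauder statements, $L_t^{\text{ou},S}$ inherits from the plain degenerate Ornstein--Uhlenbeck operator all the a~priori estimates with constants independent of $S$; using \textbf{[K]} and $S\ge0$ one moreover checks that the transition density $p^S$ of the proxy obeys anisotropic Gaussian bounds of the type \eqref{EST_DENS}, again uniformly in $S$.

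Existence of a solution of the martingale problem is obtained by compactness: mollify $(F,a_1)$ in the space variable into smooth coefficients preserving \textbf{[UE]} and the linear growth of $z\mapsto Az+F(t,z)$; each regularised equation \eqref{SDE_ATTRITO} has a non-exploding (strong, hence weak) solution; uniform moment bounds and the Kolmogorov criterion give tightness of the resulting laws on $C([0,T],\R^{2d})$, and any weak limit point solves the martingale problem for $L_t$, the continuity \textbf{[C]} of $t\mapsto a_2(t)$ being used in the passage to the limit while for $a_1$ only boundedness and almost-everywhere convergence are needed.

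Uniqueness is the heart of the matter. Fix a solution $\mathbb{P}$ started at $(t_0,x_0)$. For $\varphi\in C_0^\infty(\R^{2d})$ one constructs the fundamental solution $\Gamma_L$ of $\partial_s+L_s$ by the parametrix (Duhamel) iteration off the proxy kernel $p^S$, $\Gamma_L=p^S+p^S\ast(\mathcal R\,\Gamma_L)$ (space--time convolution); convergence of the series rests on the uniform bounds on $p^S$ together with three cancellations. The $a_1$--contribution carries a factor bounded by $C\,\varepsilon_{a_1}$, small by \textbf{[L]} and hence summable; the regular drift $\langle F_1,D_{x_1}\cdot\rangle$ is of lower anisotropic order and time--integrable because $\beta^1\in(0,1)$; and the degenerate drift $\langle F_2(s,y_2),D_{x_2}\cdot\rangle$ is handled by subtracting $F_2(s,\theta_s(x_2))$ along the transport characteristics, so that the $(s-t_0)^{-3/2}$ time--singularity of $D_{x_2}p^S$ becomes $(s-t_0)^{-\frac32(1-\beta^2)}$, which is integrable \emph{precisely because} $\beta^2>1/3$. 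Setting $u(r,x):=\int\Gamma_L(r,x;T,y)\varphi(y)\,dy$, one justifies --- by a standard regularisation of $u$, the degenerate second derivatives being controlled only in a fractional $L^p$--sense, cf.\ \eqref{LP_KOLMO_DEG_PERT}, but this being enough --- that It\^o's formula applied to $u(s,X_s)$ under $\mathbb{P}$ yields $\mathbb{E}^{\mathbb{P}}[\varphi(X_T)]=u(t_0,x_0)$. Since the right-hand side is independent of $\mathbb{P}$, all one-dimensional (hence, by the Markov structure, all finite-dimensional) marginals of any two solutions coincide, which is the well-posedness of the martingale problem, equivalently existence and uniqueness in law for \eqref{SDE_ATTRITO}. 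For the Krylov bound, take $\varphi\equiv0$ and source $f\ge0$: then $\mathbb{E}^{\mathbb{P}_{t_0,x_0}}[\int_{t_0}^T f(s,X_s)\,ds]=\int_{t_0}^T\!\!\int\Gamma_L(t_0,x_0;s,y)f(s,y)\,dy\,ds$, and the anisotropic Gaussian bound on $\Gamma_L$ (homogeneous dimension $Q=4d$) gives $\|\Gamma_L(t_0,x_0;s,\cdot)\|_{L^{p'}(\R^{2d})}\le C(s-t_0)^{-Q/(2p)}=C(s-t_0)^{-2d/p}$; H\"older's inequality in time then bounds the expectation by $C_{p,q}\|f\|_{L^q((0,T),L^p(\R^{2d}))}$, the time integral converging exactly when $\tfrac{Q}{2p}+\tfrac1q<1$, i.e.\ $\tfrac{4d}{p}+\tfrac2q<2$; the general case follows by splitting $f=f^+-f^-$ and by density.

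The step I expect to be the main obstacle is uniqueness, and within it the construction of a well-controlled fundamental solution $\Gamma_L$ for the \emph{full} operator in this low-regularity degenerate setting: one must (i) prove the anisotropic heat-kernel bounds for the degenerate proxy $p^S$ uniformly in the non-negative perturbation $S$ --- which is exactly where the constant-preservation of Theorems~\ref{uno}--\ref{d44} is indispensable, $a_2(t)$ being allowed to vanish on subintervals of $[0,T]$ without any loss --- and (ii) close the Duhamel iteration against the merely measurable (but small, by \textbf{[L]}) diffusion error and, above all, against the H\"older drift in the degenerate direction, where $\beta^2>1/3$ is the sharp threshold for the time-singularity of $D_{x_2}p^S$ to remain integrable; the final regularisation licensing It\^o's formula for the parametrix solution $u$ also needs care, due to the absence of a classical $D_{x_2}^2 u$.
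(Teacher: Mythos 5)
Your overall architecture (existence by compactness, uniqueness through a parametrix for the full generator, Krylov-type bound from kernel estimates plus H\"older in time) is the natural one, and it is in fact the route the paper itself only sketches (via \cite{chau:meno:17}, \cite{menozziSPA}); but two steps at the heart of your argument are not closed. First, you invoke Theorems \ref{uno} and \ref{d44} to assert that the transition density $p^S$ of the proxy $L_t^{\text{ou},S}$ satisfies anisotropic Gaussian bounds ``of the type \eqref{EST_DENS}'' uniformly in the non-negative perturbation $S$. Those theorems are stability statements for $L^p$ and Schauder \emph{norms of solutions of the Cauchy problem}; they carry no pointwise information on the heat kernel. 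Worse, a bound with the unperturbed scaling cannot hold uniformly in $S$: adding $\tfrac12 a_2(t)$ changes the covariance of the proxy in the degenerate direction from order $(s-t)^3$ to order $(s-t)$ whenever $a_2$ is non-degenerate, so the Gaussian factor in \eqref{EST_DENS} (spatial decay at scale $(s-t)^{3/2}$ in $x_2$) is simply false for $p^S$ in that regime, while it reverts to the hypoelliptic scaling when $a_2$ vanishes. Kernel bounds interpolating between these regimes must be proved separately; they are not a consequence of the constant-preservation results of this paper. A related, smaller point: $\varsigma$ in \textbf{[L]} is only measurable, so your $S(t)$ is not continuous in $t$ as Theorem \ref{uno} requires, and mollifying a merely measurable $\varsigma$ does not keep $\sup_t|a_1(t,x)-\varsigma(t)|$ small, so the smallness in \textbf{[L]} may be lost.

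Second, the Duhamel iteration is not actually closable with your choice of proxy. Your remainder $\mathcal R_t$ contains the \emph{whole} drift $F$, while $p^S$ is centered along $e^{A(s-t)}x$; the deterministic displacement in the degenerate component caused by the neglected drift is of order $(s-t)$, which is large compared with the kernel's own degenerate scale $(s-t)^{3/2}$, so the corresponding term in the parametrix series is not controlled, and the aside ``subtract $F_2(s,\theta_s(x_2))$ along the transport characteristics'' cannot repair this for a fixed space-homogeneous proxy: the re-centering has to be built into the proxy itself, i.e.\ the Gaussian process must be frozen/linearized along the backward flow $\dot\theta_{t,T}=A\theta_{t,T}+F(t,\theta_{t,T})$ --- precisely the construction the paper sets up before breaking off, following \cite{chau:meno:17} and \cite{menozziSPA}. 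Only with that flow-centered proxy does the $\beta^2>1/3$ H\"older gain offset the $(s-t)^{-3/2}$ singularity of the degenerate gradient of the kernel, and only then do the smallness of $\varepsilon_{a_1}$ and the boundedness of $F_1$'s contribution make the series converge. As written, your decomposition $L_t=L_t^{\text{ou},S}+\mathcal R_t$ replaces the step that is the real content of the proof by an appeal to results (of this paper) that do not provide it.
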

To prove Theorem \ref{THM_SDE_ATTRITO} we will proceed through a perturbative approach similarly to what was done in \cite{chau:meno:17}, \cite{menozziSPA}. To this end we first consider an underlying Gaussian proxy.
\subsection{Deterministic flow and Proxy Gaussian model}
 Introduce, for fixed $ T>0,\ y \in \R^{2d}$ and $t\in [0,T]$ the backward flow:
\begin{align}
\label{DET_SYST}
\overset{.}{\theta}_{t,T}(y)=&G(t,\theta_{t,T}(y)),\ \theta_{T,T}(y)=y\notag\\
G(t,z)=&Az+F(t,z),\ \in \R^{2d} .
\end{align}
\begin{remark}
We mention carefully that from  the Cauchy-Peano theorem, a solution to  \eqref{DET_SYST} exists. Indeed, the coefficients are continuous and have at most linear growth.
\end{remark}

\subsection{Linearized Multi-scale Gaussian Process and Associated Controls}\label{sec:freezing}
We now want to introduce the forward linearized flow around a solution of \eqref{DET_SYST}. Namely, we consider for $s\ge 0$ the deterministic ODE
\begin{equation}
\label{LIN_AROUND_BK_FLOW}
\overset{.}{\tilde{\phi}}_s =  A
\tilde{\phi}_s
+{ F}(s,\theta_{s,T}(y)).
\end{equation}

We consider now the Gaussian stochastic linearized dynamics $(\tilde X_s^{T,y})_{s\in [t,T]} $:
\begin{eqnarray}
&&\hspace*{-.5cm}d\tilde X_s^{T,y}=[ A\tilde X_s^{T,y}+F(s,\theta_{s,T}(y))]ds +\left(\begin{array}{c}\sigma_1(s,\theta_{s,T}(y)) dW_s^1\\
\sigma_2(s)dW_s^2,\end{array}\right),\nonumber\\
&& \hspace*{.75cm}\forall s\in  [t,T],\
 \tilde X_t^{T,y}=x. \label{FROZ}
 \end{eqnarray}

 From equations \eqref{LIN_AROUND_BK_FLOW}  we explicitly integrate \eqref{FROZ} to obtain for all $v\in [t,T] $:
 \begin{equation}
 \label{INTEGRATED}
\begin{split}
\tilde  X_v^{T,y}&=e^{A(v-t)}x+\int_t^v  e^{A(v-s)} F(s,\theta_{s,T}(y))ds\\
 & +\int_t^v e^{A(v-s)}\left(\begin{array}{c}\sigma_1(s,\theta_{s,T}(y)) dW_s^1\\
 \sigma_2(s)dW_s^2\end{array}\right).
 \end{split}
 \end{equation}
}

\mysection{Additional stability results in anisotropic Sobolev space and Schauder estimates}
\label{ESTENSIONI}

In this section we extend the previous approach to derive the stability with respect to a second order perturbation of the  OU operator in \eqref{DEF_OU_OP_PROXY} under the Kalman condition \textbf{[K]}. Here we consider   also
$L^p$-estimates involving the degenerate components of the OU operator and some associated Schauder estimates.

\subsection{Anisotropic Sobolev spaces and maximal
$L^p$ regularity}
With the notations of Section \ref{KH_OU} we write $z\in \R^N $ as $z=(x,y)=(x,y_1,\cdots, y_k) $ with $x\in\R^{d_0}$, $y_i\in \R^{\mathfrak d_i},\ i\in \{1,\cdots,k \},\ \sum_{i=1}^k \mathfrak d_i=d_1 $.

Given $\beta $ in $(0,1)$ and $i$ in $\llbracket 1, k \rrbracket$, we want to introduce the $\beta$-fractional Laplacian $\Delta^\beta_{y_i}$ along the \textit{component} $y_i$.
To do so, we follow \cite{Huang:Menozzi:Priola19} by considering the orthogonal projection $p_i\colon \R^N\to \R^{\mathfrak d_i}$ such that $p_i(z)=p_i\big((x,y)\big)=y_i$ and denoting its adjoint by $E_i\colon \R^{\mathfrak d_i}\to \R^N$.
\newline
We can now define the $\beta$-fractional Laplacian $\Delta^\beta_{y_i}$ as:
\[\Delta^{\beta}_{y_i}\phi(z) \, := \, \text{p.v.}\int_{\R^{\mathfrak d_i}}\left[\phi(z+E_i w)-\phi(z)\right] \frac{dw}{|w|^{\mathfrak d_i+\textcolor{black}{2}\beta}}, \quad z \in \R^N,\]
for any sufficiently regular function $\phi \colon \R^N\to \R$.\newline
Let $p$ in $(1,+\infty)$, we recall that we have denoted by $L^p((0,T)\times \R^N)$ the standard $L^p$-space with respect to the Lebesgue measure.\newline
We can now define the appropriate anisotropic Sobolev space to state our results.
For notational simplicity, let us denote
\begin{equation}\label{INDEXES}
\alpha_{i} \,:=\, \frac{1}{1+2 i}.
\end{equation}
Set now $\alpha:=(\alpha_1,\cdots,\alpha_k) \in \R^{k}$. The \textit{homogeneous} space $\dot{W}^{2,p}_\alpha([0,T]\times \R^N)$ is composed by all the functions $\varphi\colon [0,T]\times\R^N\to \R$ in $L^p([0,T]\times \R^N)$ such that $(t,z)\in [0,T]\times\R^N\mapsto \Delta_x \varphi(t,z) \in L^p([0,T]\times \R^N) $, where $\Delta_x \varphi$ is intended  in distributional sense,   and for any $i$ in $\llbracket 1, k \rrbracket$, $\Delta^{\alpha_i}_{y_i}\varphi(t,z)$ is well defined for almost every $(t,z)$ and
\[\Delta^{\alpha_i}_{y_i}\varphi(t,z) \, := \,  \Delta^{\alpha_i}_{y_i}\varphi(t,\cdot)(z)   \text{ belongs to } L^p([0,T]\times \R^N).\]
It is endowed with the natural \textit{semi}-norm $\Vert \varphi \Vert_{\dot{W}_\alpha^{2,p}}$  where
 \begin{equation}\label{SEMI_NORM_SOB}
 \Vert \varphi \Vert_{\dot{W}_\alpha^{2,p}}^p \, = \, \|\Delta_x \varphi\|_{L^p}^p +\sum_{i=1}^k\Vert \Delta^{\alpha_i}_{y_i}\varphi \Vert_{L^p}^p.
\end{equation}
The thresholds in \eqref{INDEXES} might seem awkward at first sight. They actually correspond to the indexes needed to get stability of the harmonic functions associated with the principal part of \eqref{DEF_OU_OP_PROXY}, that is considering $A_0$ consisting in the subdiagonal part of $A$ only  (i.e., considering  \eqref{sotto} when the diagonal elements and the  strictly upper diagonal elements  are equal  to zero) along  an associated dilation operator. Namely, setting  \begin{equation}
\label{DEF_OU_OP_PROXY_0}
{L}_0^\text{ou} f(z) = 
{\rm Tr}(B D^2 f(z))
+ \langle A_0 z , D f(z)\rangle,\;\;\;  \; z =(x,y) \in \R^{d_0 + d_1}= \R^N,
\end{equation}
so that $A_0,B $ satisfy \textbf{[K]}, if $ (\partial_t -{L}_0^\text{ou})u(t,z)=0$ then for all $ \lambda>0$
$(\partial_t -\mathcal{L}_0^\text{ou})u\big(\delta_\lambda(t,z) \big)=0$ where the dilation operator
$$\delta_\lambda(t,z)=(\lambda^{1/2}t , \lambda x, \lambda^{1/3} y_1,\cdots, \lambda^{1/(1+2k)} y_k).
$$
precisely exhibits the exponents in \eqref{INDEXES} for the degenerate components.

In \cite{Huang:Menozzi:Priola19}, see also \cite{chen:zhan:19}  and \cite{menozziSPA} where time inhomogeneous coefficients are considered as well, it has been proven that if $A,B $ satisfy \textbf{[K]} and  {\sl  the  diagonal and the strictly upper diagonal elements of $A$ in \eqref{sotto}
are equal to zero} (i.e., $A = A_0$)
then the following Sobolev estimates hold:
\begin{equation}\label{SOB_EST_GLOB}
\Vert u \Vert_{\dot{W}^{2,p}_\alpha} \, \le \, C_p\Vert f \Vert_{L^p},
\end{equation}
 with  $C_p = C_p (\nu, A, d_0, d_1)$,  where again $u$ is the unique bounded solution  to the corresponding Cauchy problem \eqref{eq:OU_initial:intro}. In particular we get also the maximal smoothing effects w.r.t. the degenerate directions.
  Note that the solution $u$ to \eqref{ko_K}
 verifies
\eqref{SOB_EST_GLOB}.
 The specific structure assumed on $A$ is actually due
to the fact that for such matrices there is an underlying homogeneous space structure which makes easier to establish maximal regularity estimates (see e.g. \cite{coif:weis:71} in this general setting).

 If  $A,B $ satisfy \textbf{[K]} with  a general $A$ as in \eqref{sotto}, having non zero strictly upper diagonal entries (non zero entries in the diagonal should  not create difficulties)
 we believe that  the approach in \cite{Bramanti:Cupini:Lanconelli:Priola10} could extend to show that  \eqref{SOB_EST_GLOB} still holds in this general setting. However   such estimates have not been, up to our best knowledge, proven yet.

 \paragraph {\bf $L^p$-estimates for  the degenerate directions of special OU operators.}Setting, as in Section \ref{SEZ_STRAT}, $u(t,z) = v(t, e^{tA} z)$
and since $ u$ solves \eqref{eq:OU_initial:intro} we have that $v$ in turn solves \eqref{ma}. From  the previous computations, setting $B_I = \begin{pmatrix}
 I_{d_0,d_0}  & 0_{d_0,d_1}\\
 0_{d_1,d_0} & 0_{d_1,d_1}
\end{pmatrix}$ and considering $A$ as in \cite{Huang:Menozzi:Priola19},    with  the diagonal and the strictly upper diagonal elements of $A$
 equal to zero in \eqref{sotto}, we derive
\begin{align*}
\|D_x^2 u  \|_{L^p((0,T)\times \R^N)}=&\|  
B_I e^{t A^*} D^2 v (t, e^{tA} \cdot ) \, e^{tA} B_I 
\|_{L^p((0,T)\times \R^N)} \\
\le& C_p  \|  \tilde f(t, e^{tA}  \cdot )  \|_{L^p((0,T)\times \R^N)}.
\end{align*}
On the other hand, for all $i\in \{1,\cdots,k\} $ and with $\alpha_i $ as in \eqref{INDEXES},
\begin{gather*}
 \|\Delta_{y_i}^{\alpha_i} u  \|_{L^p((0,T)\times \R^N)}^p
 \\ =\int_0^T dt\int_{\R^N}dz \Big|{\rm p.v.}\int_{\R^{\mathfrak d_i}}[u(t,z+E_iw)-u(t,z)] \frac{dw}{|w|^{\mathfrak d_i+2\alpha_i}}\Big|^p\\
=\int_0^T dt\int_{\R^N}dz \Big|{\rm p.v.}\int_{\R^{\mathfrak d_i}}[v(t,e^{tA}(z+E_iw))-v(t, e^{tA}z)] \frac{dw}{|w|^{\mathfrak d_i+2\alpha_i}}\Big|^p
\\
 =\int_0^T dt\int_{\R^N}dz \Big|{\rm p.v.}\int_{\R^{\mathfrak d_i}}[v(t, z+ e^{tA} E_iw))-v(t, z)] \frac{dw}{|w|^{\mathfrak d_i+2\alpha_i}}\Big|^p
\\
=:\|\Delta^{\alpha_i,i,A}v\|_{L^p((0,T)\times \R^N)}^p,
\end{gather*}
using that $Tr(A)=0$. Hence, setting
\begin{gather*}
 \|\Delta^{\alpha_0,0,A}v\|_{L^p((0,T)\times \R^N)}^p:= \| {\rm Tr} \big(
 {B_I} e^{ t A^*} D^2 v (t, e^{tA} \cdot ) \, e^{tA}{B_I}\big)  \|_{L^p((0,T)\times \R^N)}^p
\\ =  \| {\rm Tr} \big(B_I e^{ t A^*} D^2 v  \, e^{tA} B_I \big)  \|_{L^p((0,T)\times \R^N)}^p,
\end{gather*}
we get from the definition \eqref{SEMI_NORM_SOB} that the estimate \eqref{SOB_EST_GLOB} rewrites in term of $v$ as:
\begin{equation}
\label{EST_SOB_V}
\Vert v \Vert_{\dot{W}^{2,p,A}_\alpha}^p:=\sum_{i=0}^k \|\Delta^{\alpha_i,i,A}v\|_{L^p((0,T)\times \R^N)}^p\le \tilde C_p \|f\|_{L^p((0,T)\times \R^N)}^p
\end{equation}
with $\tilde C_p = C_p^p.$ We now want to prove that for $w$ solving \eqref{d2}, namely
\begin {equation*}
\begin{cases}
 \partial_t w(t, z)    =  {\text Tr} \big( e^{tA} B
 e^{tA^*} D^2 w(t, z) \big) + {\text Tr} \big( e^{tA} S(t)
 e^{tA^*} D^2 w(t, z) \big)
 \\ \;\;\;\;\;\; \quad \quad + \, \tilde f(t,z),\ (t,z)\in (0,T)\times \R^N,\\
 w(0,z)=0,\ z\in \R^N,
 \end{cases}
\end{equation*}
it also holds that
\begin{equation}
\label{EST_SOB_W}
\Vert w\Vert_{\dot{W}^{2,p,A}_\alpha}^p:=\sum_{i=0}^k \|\Delta^{\alpha_i,i,A}w\|_{L^p((0,T)\times \R^N)}^p  \le \tilde C_p \|f\|_{L^p((0,T)\times \R^N)}^p,
\end{equation}
with the same constants $\tilde C_p$ as  in \eqref{EST_SOB_V}. This can be done through the previous perturbative approach of Section \ref{SEC_PERTURBATIVO} employed to prove Theorem \ref{uno}, which actually gives the expected control for   the second order derivatives contribution of the semi-norm $\Vert \cdot\Vert_{\dot{W}^{2,p,A}_\alpha} $.

For the other contributions and  with the notations of Section \ref{SEC_PERTURBATIVO}, with $Q'(s)=e^{sA}S(s)e^{sA^*}$ and  with $m $ which is the Lebesgue measure on $[0,T] \times \R^N$ (indeed in the present  case $g(t)= {\rm det}(e^{-At}) =1$, for all $t$)   we would get
\begin{gather*}
\Vert \bar v_\epsilon\Vert_{\dot{W}^{2,p,A}_\alpha}^p=\sum_{i=0}^k\|   \Delta^{\alpha_i,i,A}\bar v_{\epsilon}   \,  \|_{L^p((0,T)\times \R^N)}^p
= \sum_{i=0}^k \int_{(0,T) \times \R^N} |\Delta^{\alpha_i,i,A} \bar v_{\epsilon} (t,z) |^p dz dt
\\
=\sum_{i=0}^k \int_{(0,T) \times \R^N} | \E [\Delta^{\alpha_i,i,A}  v_{\epsilon} (t,z + \epsilon X_t) ] \,|^p dz dt
\\
\le\sum_{i=0}^k   \int_{[0,T] \times \R^N} \E [ | \Delta^{\alpha_i,i,A}  v_{\epsilon} (t,z + \epsilon X_t)  \,|^p] dz dt
\\
= \sum_{i=0}^k\E \int_{[0,T] \times \R^N}   | \Delta^{\alpha_i,i,A}  v_{\epsilon}  (t,z + \epsilon X_t)  \,|^p dz dt
\\
 = \sum_{i=0}^k\textcolor{black}{\mathbb E}\int_{[0,T] \times \R^N}  |  \Delta^{\alpha_i,i,A}  v_{\epsilon} (t,\bar z )  \,|^p d\bar z dt
\le \tilde C_p  \|   f  \|_{L^p((0,T)\times \R^N)}^p,
\end{gather*}
{using for the last inequality that $v_\epsilon$ also satisfies \eqref{EST_SOB_V} (similarly to what had been established in \eqref{w11})}.

The same previous procedure  and the
 final  compactness argument then yields \eqref{EST_SOB_W}. Setting eventually $\tilde u(t,z) := w(t, e^{tA} z)$, which is the unique integral solution (smooth in space) of
 \begin{equation*}
\begin{cases}
  \partial_t  u_S(t,z) =  L_t^{\text{ou}, S}  u_S(t,z) + f  (t,z),\ (t,z)\in (0,T)\times \R^N,\\
  u_S(0,z)=0,\ z\in \R^N,
\end{cases}
\end{equation*}
where  $ L_t^{\text{ou}, S}$ introduced in \eqref{ciao}  is the Ornstein-Uhlenbeck operator  perturbed at second order,  we derive that
\begin{equation}\label{SOB_EST_GLOB_PERT}
\Vert  u_S \Vert_{\dot{W}^{2,p}_\alpha} \, \le \, C_p\Vert f \Vert_{L^p},
\end{equation}
with $C_p$ as in \eqref{SOB_EST_GLOB}. We have thus extended the results of Theorem \ref{d44} with the anisotropic Sobolev semi-norm  in \eqref{SEMI_NORM_SOB}. The estimate \eqref{SOB_EST_GLOB} is stable for a continuous, non-negative second order perturbation of the underlying degenerate Ornstein-Uhlenbeck operator.

\subsection{Anisotropic Schauder estimates}
Following Krylov \cite{book:Krylov96_Holder}, for  some fixed $\ell$ in $\N_0:=\N\cup\{0\}$ and $\beta$ in $(0,1]$, we introduce for a function $\phi\colon \R^N\to \R$ the Zygmund-H\"older semi-norm  as
\[[\phi]_{C^{\ell+\beta}} \, := \,
\begin{cases}
    \sup_{\vert \vartheta \vert= \ell}\sup_{x\neq y}\frac{\vert D^\vartheta\phi(x)-D^\vartheta\phi(y)\vert}{\vert x-y\vert^\beta} , & \mbox{if
    }\beta \neq 1; \\
    \sup_{\vert \vartheta \vert= \ell}\sup_{x\neq y}\frac{\bigl{\vert}D^\vartheta\phi(x)+D^\vartheta\phi(y)-2D^\vartheta\phi(\frac{x+y}{2})
    \bigr{\vert}}{\vert x-y \vert}, & \mbox{if } \beta =1
                             \end{cases}\]
(we are using usual multi-indices $\vartheta$ for the partial derivatives).
Consequently, the Zygmund-H\"older space $C^{\ell+\beta}_b(\R^N)$ is the family of bounded functions $\phi\colon \R^N
\to\R$ such that $\phi$ and its derivatives up to order $\ell$ are continuous and the norm
\[\Vert \phi \Vert_{C^{\ell+\beta}_b} \,:=\,  \sum_{i=0}^{\ell}\sup_{\vert\vartheta\vert = i}\Vert D^\vartheta\phi
\Vert_{\infty}+[\phi]_{C^{\ell+\beta}} \,\text{ is finite.}
\]
We can now define  the anisotropic Zygmund-H\"older spaces associated with the current setting and which again reflect the various scales already introduced in \eqref{INDEXES}. Let $\gamma\in (0,3)$, the space $C^{\gamma}_{b,d}(\R^N)$ is
the family of functions $\phi\colon \R^N\to \R$ such that for any $i$ in $\llbracket 0,k\rrbracket$ and any $z_0$ in $\R^N$, the real  function
\[w\in  \R^{\mathfrak d_i}\,  \to \, \phi(z_0+E_i(w))  \,\text{ belongs to }C^{\gamma/(1+2i)}_b\left(\R^{\mathfrak d_i}\right),\]
with a norm bounded by a constant independent from $z_0$. In the above expression, we recall that
 the $(E_i)_{i\in \{1,\cdots,k\}} $ have been defined in the previous paragraph, $\mathfrak d_0=d_0 $ and $E_0$ is the embedding matrix from $\R^{d_0} $ into $\R^N $.
 It is endowed with the norm
\begin{equation}\label{eq:def_anistotropic_norm}
\Vert\phi\Vert_{C^{\gamma}_{b,d}} \,:=\,\sup_{z_0\in \R^N}\Vert\phi\big(z_0+E_0(\cdot)\big)\Vert_{C^{\gamma}_b (\R^{\mathfrak d_0})  }+\sum_{i=1}^{k}\sup_{z_0\in \R^N} \| \phi\big(z_0+ E_i(\cdot)\big)\|_{C^{\gamma/(1+2i)}_b (\R^{\mathfrak d_i} )   }.
\end{equation}
We denote by $C^{\gamma}_{b,d}$ this function space because the regularity exponents reflect again the multi-scale features of the system; the norm could equivalently be defined through the corresponding spatial parabolic distance $d$ defined as follows. For all $z=(x,y),z'=(x',y')\in \R^N$:
$$d(z,z'):=|x-x'|+\sum_{i=1}^{k}\vert y_i-y_i'\vert^{\frac{1}{1+2i}},$$
where the exponents are again those who appeared in \eqref{INDEXES}.
\newline
 Let  $f  \in B_b\left(0,T;C^\infty_0(\R^N)\right)$.
  Under \textbf{[K]},  by the results of Lunardi
 \cite{Lunardi97} it follows that the unique bounded solution
 of the Cauchy Problem \eqref{eq:OU_initial:intro}
 (written in integral form) verifies the
    following anisotropic Schauder estimates
\begin{equation}\label{SCHAU_OU}
\Vert u \Vert_{L^\infty((0,T),C^{2+\beta}_{b,d})} \, \le \, \textcolor{black}{C_\beta} \Vert f \Vert_{L^\infty((0,T),C^{\beta}_{b,d})},
\end{equation}
for some constant ${C_\beta}$ independent from $f$, i.e.,
\begin{equation}\label{s22}
\sup_{0 \le t \le T}\Vert u (t, \cdot )\Vert_{C^{2+\beta}_{b,d}} \, \le \, \textcolor{black}{C_\beta}  \sup_{0 \le t \le T }\Vert f (t, \cdot )\Vert_{C^{\beta}_{b,d}}.
\end{equation}
We again set as in the previous paragraph $u(t,z) = v(t, e^{tA} z)$
\begin{align}
\Vert u \Vert_{L^\infty((0,T),C^{2+\beta}_{b,d})}=\Vert v(t,e^{tA}\cdot) \Vert_{L^\infty((0,T),C^{2+\beta}_{b,d})}=:\Vert v \Vert_{L^\infty((0,T),C^{2+\beta}_{b,d,A})}\notag\\
\le \textcolor{black}{C_\beta} \Vert f \Vert_{L^\infty((0,T),C^{\beta}_{b,d})}
 =   {C_\beta}\Vert \tilde f(t,e^{tA}\cdot) \Vert_{L^\infty((0,T),C^{\beta}_{b,d})}=:  {C_\beta}\Vert \tilde f\Vert_{L^\infty((0,T),C^{\beta}_{b,d,A})},\label{SC_EST_V}
\end{align}
denoting $\tilde{f}(t,z):= f(t,e^{-tA}z)$.
  We again want to prove as in Section 1.1 %
that for $w$ solving \eqref{d2},
\begin{align}\label{STIMA_W_PER_SC}
\Vert w \Vert_{L^\infty((0,T),C^{2+\beta}_{b,d,A})}\le \textcolor{black}{C_\beta}\Vert \tilde f\Vert_{L^\infty((0,T),C^{\beta}_{b,d,A})}
\end{align}
with the same  constant $\textcolor{black}{C_\beta}$ as in \eqref{SC_EST_V}. We proceed  through the previous perturbative approach of Section \ref{SEC_PERTURBATIVO}.
With the notations employed therein, we deduce that there exists a unique  solution $v_\epsilon=$  PDE$(Q,\tilde f(t,z-\epsilon X_t))$, depending also on $\epsilon$ and  $\omega$ as parameters  such that
\begin{align}
\label{proof:estimate1_SC}
\sup_{(t,z)\in [0,T]\times \R^N}| v_\epsilon(t,z)| \, &\le \, T \sup_{(t,z)\in [0,T]\times \R^N}|\tilde  f(t,z)|.
\end{align}
By  the translation  invariance of the H\"older-norms,  using also that  $X_{t}=$ $ e^{tA} e^{-tA} X_t$,  it is not difficult to prove that, for any $\omega$, $\P$-a.s.,
\begin{equation}\label{wqs}
\|  \tilde f  \|_{L^\infty((0,T),C^{\beta}_{b,d,A})} = \|  \tilde f(\cdot , \cdot \, -\epsilon X_{\cdot}) \|_{L^\infty((0,T),C^{\beta}_{b,d,A})} .
\end{equation}
Thus it also holds from \eqref{SC_EST_V}
\begin{equation}\label{w11_BIS}
\|  v_{\epsilon}    \|_{L^\infty((0,T),C^{2+\beta}_{b,d,A})} \le \textcolor{black}{C_\beta}  \|  \tilde f  \|_{L^\infty((0,T),C^{\beta}_{b,d,A})}.
\end{equation}
Recalling now that $\bar v_\epsilon(s,z) = \mathbb{E}[v_\epsilon(s,z+\epsilon X_s)]$
 is   an integral solution of
\begin{equation*}
    \partial_t \bar v_\epsilon(t,z) \, = \, \text{tr}(Q(t)D^2_z \bar v_\epsilon(t,z))+\lambda\left(\bar v_\epsilon(t,z+\epsilon l(t))-\bar v_\epsilon(t,z)\right)+ %
    \tilde f(t,z),
\end{equation*}
with zero initial condition, we write that for $i\in \{1,\cdots, k\} $, $w,w'\in \R^{\mathfrak d_i} ,$ $ \ (t,z_0)\in [0,T]\times \R^N$,
\begin{align*}
& %
|\bar v_\epsilon(t,e^{At}(z_0+E_i(w))-\bar v_\epsilon(t,e^{At}(z_0+E_i(w'))|\\
\le&\E[ |v_\epsilon(t,e^{At}(z_0+E_i(w))+\epsilon  e^{At}e^{-At} X_t)
\\ &- v_\epsilon(t,e^{At}(z_0+E_i(w'))+\epsilon e^{At}e^{-At}  X_t)|]
\\
\le& \E[ [v_\epsilon(t,e^{At}(z_0+E_i(\cdot)))]_{C^{\frac{2+\beta}{1+2i}}} ] \; |w-w'|^{\frac{2+\beta}{1+2i}}.
\end{align*}
 Hence,
\begin{align*}
[\bar v_\epsilon(t,e^{At}(z_0+E_i(\cdot)))]_{C^{\frac{2+\beta}{1+2i}}}\le& \E[ [v_\epsilon(t,e^{At}(z_0+E_i(\cdot)))]_{C^{\frac{2+\beta}{1+2i}}} ].
\end{align*}
We would get, similarly,
\begin{align*}
[D_x^2\bar v_\epsilon(t,e^{At}(z_0+E_0(\cdot)))]_{C^{\beta}}\le& \E[ [D_x^2v_\epsilon(t,e^{At}(z_0+E_0(\cdot)))]_{C^{\beta}} ],
\end{align*}
and for all $k\in \{1,2\} $,
\begin{align*}
\|D_x^k\bar v_\epsilon(t,e^{At}(z_0+E_0(\cdot))) \|_\infty\le \E[ \|D_x^k v_\epsilon(t,e^{At}(z_0+E_0(\cdot))) \|_\infty].
\end{align*}
Summing those contributions, we thus derive from \eqref{eq:def_anistotropic_norm}, \eqref{SC_EST_V} that:
\begin{equation}
\|\bar v_\epsilon\|_{L^\infty((0,T),C^{2+\beta}_{b,d,A})}\le \sup_{0 \le t \le T}\E[\|v_\epsilon (t, \cdot)\|_{C^{2+\beta}_{b,d,A})}]  \le \textcolor{black}{C_\beta}  \|  \tilde f  \|_{L^\infty((0,T),C^{\beta}_{b,d,A})},
\end{equation}
using \eqref{w11_BIS} for the last inequality.
  Now, continuing   as in Section 3.2, using also  a compactness argument, one would derive
 that \eqref{STIMA_W_PER_SC} indeed holds.

Going  backwards, setting  $\tilde u(t,z) := w(t, e^{tA} z)$,   we find that  $\tilde u  $  is the unique (integral) solution $u_S$ to \eqref{eq:OU_PERT0};
 we finally derive that
\begin{equation}\label{SCHAU_OU_PERT}
\Vert  u_S \Vert_{L^\infty((0,T),C^{2+\beta}_{b,d})} \, \le \, \textcolor{black}{C_\beta} \Vert f \Vert_{L^\infty((0,T),C^{\beta}_{b,d})},
\end{equation}
where $C_{\beta}$ is the same constant as in \eqref{SCHAU_OU}. Estimate  \eqref{SCHAU_OU_PERT} provides the extension of Theorem \ref{d44} for the anisotropic Schauder estimates.

\vskip 1mm
\begin{remark}
Let us mention that for the perturbative method to work, roughly speaking, few properties were actually needed on the underlying norm. Namely, we used the translation invariance and some kind of commutation between the norm (or a function of the norm in the $L^p$-case) and  expectation. Hence, this approach could  possibly be applied to a much wider class of estimates in other function spaces (like e.g. Besov spaces). This will concern further research. \qed
 \end{remark}

\section*{Acknowledgment}
For the first author, the work was supported by a public grant ($2018-0024$H) as part of the FMJH project. For the second author, the article was prepared within the framework of the HSE University Basic Research Program. The third author
  has been partially supported by   GNAMPA of the Istituto Nazionale di Alta  Matematica (INdAM).

\end{document}